\newcommand{\prox}{\text{prox}}
\newcommand{\vect}{\boldsymbol}
\newcommand{\iter}[2]{#1^{( #2 )}}
\newcommand{\viter}[2]{\iter{\vect{#1}}{#2}}
\DeclareMathOperator{\st}{s.t.:}
\DeclareMathOperator{\lev}{lev}
\DeclareMathOperator*{\argmin}{arg min}
\newtheorem{prop}{Proposition}
\newtheorem{theo}{Theorem}
\newtheorem{coro}{Corollary}
\title{Superiorization of Incremental Optimization Algorithms for Statistical Tomographic Image Reconstruction}
\author{Helou, E.S., Zibetti, M.V.W. and Miqueles, E.X.}
\begin{document}
   \maketitle

   \begin{abstract}
      We propose the superiorization of incremental algorithms for tomographic image reconstruction. The resulting methods follow a better path in its way to finding the optimal solution for the maximum likelihood problem in the sense that they are closer to the Pareto optimal curve than the non-superiorized techniques. A new scaled gradient iteration is proposed and three superiorization schemes are evaluated. Theoretical analysis of the methods as well as computational experiments with both synthetic and real data are provided.
   \end{abstract}

   \section{Introduction}

   Tomographic images reconstructed from projection data are important tools in various applications, for example, ranging from medicine to materials science and from geosciences to astronomy. It is therefore useful to develop the techniques that enable good reconstruction from a variety of data acquisition modalities. That means being able to cope with statistical error, poor angular sampling, truncated data, among other difficulties. Sometimes a combination of factors must be dealt with.

   Statistical methods were developed in order to handle poor photon counts, mainly for use in emission tomography modalities. Among those we can mention, in order of appearance, \textsc{em}~\cite{shv82,vsk85}, \textsc{os-em}~\cite{hul94}, \textsc{ramla}~\cite{brp96}, \textsc{bsrem}~\cite{dey01}, \textsc{drama}~\cite{tak03,hed05} and \textsc{saem}~\cite{hcc14}. The original numerical approach to solve the statistical optimization model was the \textsc{em} algorithm, which was deemed too slow, taking many iterations to provide reasonably accurate images; \textsc{os-em} subdivides the data into subsets and processes each of these incrementally in order to achieve an order of magnitude speedup compared to \textsc{em}, but leads to oscillatory behavior when applied to inconsistent data; \textsc{ramla} takes the subset approach to the extreme using a single datum at a time, but prevents oscillation through the use of relaxation in order to ensure convergence; \textsc{bsrem} generalizes \textsc{ramla} by considering more flexible subset divisions and by allowing the use of regularization in the objective function; \textsc{drama} introduced variable relaxation within an iteration cycle aiming at a more even noise contribution from each individual datum in the resulting images; finally, \textsc{saem} further enhance the possibilities by considering a parallelization approach whereby the same algorithm is applied in parallel to subsets of the data and the results are averaged to form the next iterate.

   In the present paper, we consider yet another direction for statistical methods. All of the aforementioned algorithms use a diagonal scaling of the descent direction, which provides desirable convergence characteristics and and helps in maintaining non-negativity of the iterations. This diagonal scaling has the drawback of making convergence analysis more difficult in the case of a non-differentiable objective function, which precludes several useful regularization functions. We, therefore, make use of the superiorization framework in order to analyze algorithms for this case. Our analysis will cover all of the above cited methods, allowing for the use of more interesting regularization functions alongside scaled incremental algorithms for the statistical tomographic reconstruction problem. Moreover, we present a new algorithmic framework which has better theoretical characteristics than the techniques mentioned in the previous paragraph, with similar practical performance.

   We use both the superiorized \textsc{saem} algorithm and the newly presented superiorized approach in order to solve real world tomographic problems arising from synchrotron illuminated radiographic data. Implementation of the techniques was very inhomogeneous with the former being implemented using Matlab and the second running on \textsc{gpu}s, so no direct comparison was intended to be made.

   \subsection{Tomographic Imaging from Projections}

   The most common transmission tomography technique makes use of \textsc{x}-rays, based on the Beer-Lambert law:
   \begin{equation}
      \frac{I_d}{I_e} = e^{-\int_L \mu( \vect x )\mathrm ds},
   \end{equation}
   where $I_d$ and $I_e$ are, respectively, the detected and the emitted \textsc{x}-ray beam intensities, $L$ is the line segment connecting emitter and detector and $\mu : \mathbb R^2 \to \mathbb R_+$ is the non-negative linear attenuation factor. Different attenuation factor values usually correspond to features of interest, for example, the body anatomy in human patients, and therefore its knowledge brings important information for applications. The problem is now to recover the attenuation factor $\mu$ at each point in the plane from the non-invasive measurements of line integrals of $\mu$. While, for simplicity of exposition, we focus the two dimensional case in the present paper, volumetric images can be obtained either by stacking planar tomographic images or by directly extending the approach to 3D datasets.

   We simplify the notation by parameterizing the data by the angle $\theta$ between the normal to the integration path and the horizontal axis and by the distance $t$ of this integration path to the origin of the coordinate system. In doing so, we define the Radon transform $\mathcal R$, which takes functions on the plane to functions on the cylinder as follows:
   \begin{equation}
      \mathcal R[ \mu ]( \theta, t ) := \int_{\mathbb R} \mu\left( t\left(\begin{smallmatrix}\cos\theta\\ \sin\theta\end{smallmatrix}\right) + s \left(\begin{smallmatrix}-\sin\theta\\ \cos\theta\end{smallmatrix}\right)\right) \mathrm ds.
   \end{equation}

   \begin{figure}
      \centering%
      \newcommand{\shepplogan}
{%
   \begin{scope}[line width=0pt]
      \path[color=black!100,draw,fill] (0,0)       ellipse (0.69 and 0.92);
      \path[color=black!20,draw,fill]  (0,-0.0184) ellipse (0.6624 and 0.874);

      \path[color=black!30,draw,fill] (0,0.35)       ellipse (0.21 and 0.25);
      \path[color=black!30,draw,fill] (0,-0.1)       ellipse (0.046 and 0.046);
      \path[color=black!30,draw,fill] (-0.08,-0.605) ellipse (0.046 and 0.023);
      \path[color=black!30,draw,fill] (0,-0.606)     ellipse (0.023 and 0.023);
      \path[color=black!30,draw,fill] (0.06,-0.605)  ellipse (0.023 and 0.046);
      \path[color=black!30,draw,fill] (0.06,-0.605)  ellipse (0.023 and 0.046);

      \path[color=black!0,draw,fill,xshift=0.22\grfxunit,rotate=-18] (0,0) ellipse (0.11 and 0.31);
      \path[color=black!0,draw,fill,xshift=-0.22\grfxunit,rotate=18] (0,0) ellipse (0.16 and 0.41);
      \begin{scope}
         \path[clip,xshift=-0.22\grfxunit,rotate=18] (0,0)    ellipse (0.16 and 0.41);
         \path[color=black!10,draw,fill]             (0,0.35) ellipse (0.21 and 0.25);
         \path[color=black!10,draw,fill]             (0,-0.1) ellipse (0.046 and 0.046);
      \end{scope}
   \end{scope}

   \path[color=black!30,draw,fill]    (0,0.1)  ellipse (0.046 and 0.046);
   \begin{scope}
      \path[clip]                     (0,0.1)  ellipse (0.046 and 0.046);
      \path[color=black!40,draw,fill] (0,0.35) ellipse (0.21 and 0.25);
   \end{scope}
}%
      \input{figradonteta.tex}%
      \def\tlinha{-0.6}%
      \setlength{\grftotalwidth}{0.45\columnwidth}%
      \setlength{\grfticksize}{0.5\grfticksize}%
      \small{\ }\hfill%
      \begin{grfgraphic}{%
         \def\grfxmin{-2.05}\def\grfxmax{1.5}%
         \def\grfymin{-1.5}\def\grfymax{2.05}%
      }%
         \begin{scope}[>=stealth,style=grfaxisstyle,<->]%
            \shepplogan%
            \draw (-1.5,0) -- (1.5,0);%
            \draw (0,-1.5) -- (0,1.5);%
            \foreach \i in {-1,1}%
            {
               \draw[style=grftickstyle,-] (\i\grfxunit,-\grfticksize) -- (\i\grfxunit,\grfticksize);%
               \draw[style=grftickstyle,-] (-\grfticksize,\i\grfyunit) -- (\grfticksize,\i\grfyunit);%
            }
            \begin{scope}[rotate=\teta]
               \draw (-1.5,0) -- (1.5,0);
               \foreach \i in {-1,1}
                  \draw[style=grftickstyle,-] (\i\grfxunit,-\grfticksize) -- (\i\grfxunit,\grfticksize);
               \draw[dashed,dash phase=-0.005\grfyunit] (\tlinha,-1.5) -- (\tlinha,1.5);
               \fill (\tlinha,0) node[anchor=north,inner sep=\grflabelsep] {\scriptsize$t$} circle (0.025cm);
               \draw[-] (\tlinha\grfxunit,0.3em) -| (\tlinha\grfxunit - 0.3em,0pt);
               \fill (\tlinha\grfxunit - 0.15em,0.15em) circle (0.025cm);
            \end{scope}
            \begin{scope}[rotate=\teta,yshift=1.5\grfyunit]
               \draw[line width=0.025cm] plot file {figradon.data};
               \draw[-]  (1.5,0) -- (-1.5,0) node[anchor=north,rotate=\teta,inner sep=\grflabelsep] {\scriptsize$t$};
               \draw[->] (0,0)    -- (0,0.7) node[anchor=west,rotate=\teta,inner sep=\grflabelsep]  {\scriptsize$\mathcal R[ \mu ](\theta,t)$};
            \end{scope}
            \def\rad{0.075}
            \FPupn{\cpt}{0.552285 \rad{} * 90 \teta{} / *}
            \path (\teta:\rad) ++(\teta - 90:\cpt) node (a) {};
            \draw[-] (0,0) -- (\rad,0) .. controls +(0,\cpt) and (a) .. (\teta:\rad) -- cycle;
            \draw[style=grftickstyle,-,rotate=\tetameio,xshift=\rad\grfxunit] (-\grfticksize,0pt) -- (\grfticksize,0pt);
            \path[rotate=\tetameio,xshift=0.3em] (\rad,0) node[anchor=west,inner sep=0pt] {\scriptsize$\theta$};
         \end{scope}
      \end{grfgraphic}\hfill%
      \begin{grfgraphic}[1.125]{%
         \grfyaxis[R]{[]-1;0;1[]}{[]\tiny$-1$;\tiny$0$;\tiny$1$[]}%
         \grfylabel{\footnotesize$t$}%
         \grfxaxis[R]{[]0;1.57;3.14[]}{[]\tiny$0$;\tiny$\frac\pi2$;\tiny$\pi$[]}%
         \grfxlabel{\footnotesize$\theta$}%
         \def\grfxmin{0}%
         \def\grfxmax{3.14}%
         \grfwindow%
      }%
         \node[anchor=north west,inner sep=0pt] at (0,1)
         {\includegraphics[width=3.14\grfxunit,height=2\grfyunit]
         {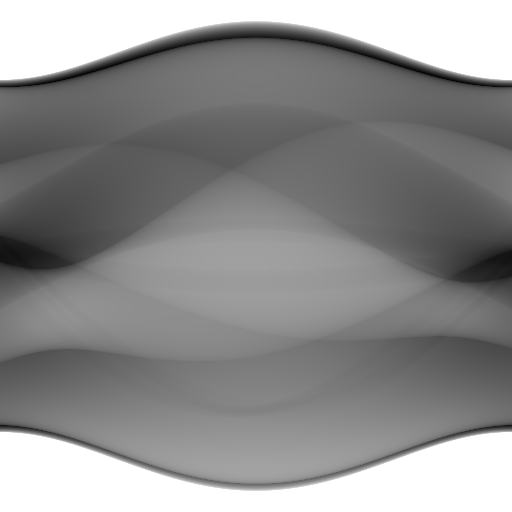}};%
      \end{grfgraphic}\hfill{\ }%
      \caption{Left: meaning of the arguments of the Radon transform. Here, $\theta$ is the angle between the normal to
      the integration path and the horizontal axis, and $t$ is the distance from the line of integration to the origin.
      Right: sinogram, i.e., an image of the Radon transform of the image shown on the left in the $\theta \times t$ coordinate system (adapted from~\cite{hcc14}).}\label{fig:rad}%
   \end{figure}

   Figure~\ref{fig:rad} depicts this definition. The Radon transform is mathematically rich, possessing several useful properties. Among those we can mention, for example, the Fourier slice property, which relates the one-dimensional Fourier transforms of $\mathcal R[\mu]$ along the $t$ coordinate to ``slices'' of the two-dimensional Fourier transform of the image $\mu$. More importantly for us here, the Radon transform turns out to be a compact linear operator which has, in consequence, an ill-conditioned inverse or pseudo-inverse~\cite{nat86}.

   \subsection{Iterative Image Reconstruction}

   While exploring some deep mathematical properties of the Radon transform can lead to useful inversion formulas, sometimes it is better to consider all of the involved physical effects more carefully. For example, the observation that the emission of photons follows a stochastic rule can lead to improved imaging by the introduction of maximum likelihood models whereby the reconstructed image is selected as the one that would give the maximum possible likelihood for the observed data. Whichever the imaging modality is, this approach will give rise to an optimization problem to be solved, therefore bringing the need to use iterative algorithms in order to approximate the solution of the model.

   In every optimization model, whether based on statistical ideas or not, a discretization of the imaged object has to be taken into consideration and, given the finite nature of the data, the Radon transform, or more physically precise stripe-based integral transforms, can be represented by a matrix $R \in \mathbb R^{m \times n}$. In this setting, the image is written as
   \begin{equation*}
      \sum_{j = 1}^n x_j \mu_j,
   \end{equation*}
   where $\{ \mu_1, \mu_2, \dots, \mu_n \}$ is a basis for the space where the imaged object is assumed to be and $x_j$ are the components of vector $\vect x \in \mathbb R^n$. In all of our models, the basis will be the indicator functions of square pixels which also means that the $x_j$ are the actual values of the pixelized image. If taken literally, the problem would then become to find the solution of the linear system of equations
   \begin{equation*}
      R \vect x = \vect b,
   \end{equation*}
   where the coefficients of $R$ are $r_{ij} = \mathcal R[ \mu_j ]( \theta_i, t_i )$ and the components of $\vect b$ are approximations to the Radon transform: $b_i \approx \mathcal R[ \mu ]( \theta_i, t_i )$, where $\mu$ is the desired true image. The system matrix arising in tomographic problems ranges from mildly to severely ill-posed, which brings difficulties because of the experimental error in the collected tomographic data $\vect b$.

   Therefore, instead of directly solving the above linear system of equations, it is usually more profitable to consider a (regularized) maximum likelihood model. For example, in emission tomography the desired image is the intensity of photon emission across the imaged plane, i.e., the number of emitted photons on each square pixel~\cite{vsk85}. In his case, maximizing the likelihood of obtaining the data is equivalent to solve the following optimization problem:
   \begin{equation*}
      \begin{split}
         \min & \quad L_{\textsc{e}}( \vect x ) := \sum_{i = 1}^m\left\{ ( R\vect x )_i - b_i\ln ( R\vect x )_i \right\}\\
         \st  & \quad \vect x \in\mathbb R_+^n,
      \end{split}
   \end{equation*}
   where, in this case, $b_i$ represents the attenuation corrected number of photons detected on bin $i$.

   For the case of transmission tomography, if we denote as $\rho_i$ the number of events detected by detector $i$ during a dark scan (i.e., no \textsc{x}-ray source turned on), $\beta_i$ to be the number of photons detected by detector $i$ during a blank scan and $\alpha_i$ the number of photons detected by detector $i$ during the actual tomographic scan of the object, the maximum likelihood problem is equivalent to~\cite{rak13}:
   \begin{equation*}
      \begin{split}
         \min & \quad L_{\textsc{tr}}( \vect x ) := \sum_{i = 1}^m\left\{ \beta_ie^{-( R\vect x )_i} - \alpha_i \log\left( e^{-(R \vect x )_i} + \rho_i\right) \right\}\\
         \st  & \quad \vect x \in\mathbb R_+^n.
      \end{split}
   \end{equation*}

   Both of the above functions are convex~\cite{hil93} (convexity of $L_{\textsc{tr}}$ requires assumptions on the data which are always satisfied in practice), although not always strictly convex. Non-convex models do exist for the tomographic imaging problem, but due to the usually large dimensions of the problem algorithms for the convex optimization problem are more practical and much more common in practice. We will focus only on convex models in the present paper. In the next section we present the proposed algorithms and the general optimization model they intend to solve.

   \section{Algorithm Description}

   For optimization problems like
   \begin{equation}\label{eq:prob_basic}
      \begin{split}
         \min & \quad f( \vect x )\\
         \st  & \quad \vect x \in \mathbb R_+^n,
      \end{split}
   \end{equation}
   with $f : \mathbb R^n \to \mathbb R$ convex and differentiable, we will consider algorithms of the form
   \begin{equation}\label{eq:alg_opt}
      \viter x{k + 1} = \viter xk - \lambda_kD( \viter xk )\nabla f( \viter xk ) + \viter \epsilon k,
   \end{equation}
   where the diagonal scaling matrix will be given differently according to one of two algorithmic schemes to be described later.
   Furthermore, it is assumed that
   \begin{equation*}
      \lambda_k > 0, \quad \sum_{k = 0}^\infty \lambda_k = \infty.
   \end{equation*}

   For us, the usefulness of allowing the error sequence $\{ \viter \epsilon k \}$ in the analysis will be twofold. First, as already observed before in the literature~\cite{brp96,dey01,hed05,hcc14}, it allows for flexible, from sequential to parallel with many intermediate instances, processing of the data giving rise to fast and stable algorithms. Second, it will allow us to apply the superiorization concept in order to drive the iterates to the optimum through a path with smoother intermediate iterates, as follows. We write an algorithm following~\eqref{eq:alg_opt} in a  compact form as
   \begin{equation}
      \viter x{k + 1} = \mathcal O( \lambda_k, \viter xk ),
   \end{equation}
   and we create a new algorithm following
   \begin{equation}\label{eq:algo_sup}
      \viter x{k + 1} = \tilde{\mathcal O}( \lambda_k, \viter xk ) := \mathcal O( \lambda_k, \viter xk ) + \viter sk,
   \end{equation}
   where we name $\{ \viter sk \}$ the \emph{superiorization sequence}. Algorithm~\eqref{eq:algo_sup} is correspondingly named the \emph{superiorized} version of Algorithm~\eqref{eq:alg_opt}. Notice that if we define $\iter{\tilde{\vect\epsilon}}k := \viter\epsilon k + \viter sk$, we then have that iterations~\eqref{eq:algo_sup} are equivalent to
   \begin{equation*}
      \viter x{k + 1} = \viter xk - \lambda_kD( \viter xk )\nabla f( \viter xk ) + \iter{\tilde{\vect\epsilon}} k.
   \end{equation*}
   That is, the essential approximation characteristics of the method are not ruined by adding a deliberate, appropriately small, perturbation. We shall worry about convergence issues in the next section.

   In the remainder of the present section we describe two options for the operator $\mathcal O$, one of which is an existing algorithm while the second is our new proposal. Furthermore, we also describe more precisely the two alternatives for the superiorization sequence we will try in the numerical section.

   \subsection{String-Averaging Expectation Maximization}\label{saem-sup}

   The String-Averaging Expectation Maximization (\textsc{saem}) algorithm was introduced in~\cite{hcc14} and applied to a maximum likelihood problem in tomographic reconstruction. Convergence analysis contained in~\cite{hcc14} embraces somewhat general smooth convex cost functions, and therefore we will keep the acronym \textsc{saem} even when not necessarily referring to applications to likelihood maximization problems.

   In order to describe \textsc{saem}, we assume that the objective function can be written as:
   \begin{equation*}
      f( \vect x ) = \sum_{i = 1}^pf_i( \vect x ),
   \end{equation*}
   where $f : \mathbb R^n \to \mathbb R$ is supposed to be a convex function and $f_i : \mathbb R^n \to \mathbb R$ are sufficiently smooth functions.

   We then assume that the sequence $\{ 1, 2, \dots, p \}$ is split in $s$ (ordered) subsets, $\{ S_1, S_2, \dots, S_s \}$, each of which is called a string, satisfying
   \begin{equation*}
      \bigcup_{l = 1}^s S_l = \{ 1, 2, \dots, p \}\quad\text{and}\quad S_l \cap S_\ell = \emptyset\quad\forall l \neq \ell.
   \end{equation*}
   We make each of the subsets explicit through the following notation for the elements of each string:
   \begin{equation*}
      S_l := \{ \iota^l_1, \iota^l_2, \dots, \iota^l_{\nu( l )} \}.
   \end{equation*}
   That is, $\iota^l_i$ is the $i^{\text{th}}$ element of string $S_l$ and $\nu( l )$ is the cardinality of string $S_l$.

   Now, for each string we iterate a scaled incremental gradient for the function $F_l := \sum_{i \in S_l} f_i$. Let us define the string operators $\mathcal S_l : \mathbb R \times \mathbb R^n \to \mathbb R^n$ for this purpose:
   \begin{equation*}
      \mathcal S_l( \lambda, \vect x ) := \viter y{\nu( l )},
   \end{equation*}
   where $\lambda$ is a positive scalar, $\vect x \in \mathbb R^n$ and the vectors $\viter y{i} \in \mathbb R^n$, $i \in \{ 1, 2, \dots, \nu( l ) \}$ are recursively computed from the identity below:
   \begin{equation*}
      \viter y{i} = \viter y{i - 1} - \lambda D( \viter y{i - 1} )\nabla f_{\iota^l_i}( \viter y{i - 1} ), \quad i \in \{ 1, 2, \dots, \nu( l ) \}.
   \end{equation*}
   Recursion starts from $\viter y0 = \vect x$, and the scaling matrix is given by
   \begin{equation}\label{eq:diagmatrix}
      D( \vect x ) := \begin{pmatrix}
         \displaystyle\frac{x_1}{p_1} & & & &\\
          & \displaystyle\frac{x_2}{p_2} & &\\
          & & \ddots &\\
          & & & \displaystyle\frac{x_n}{p_n}
             \end{pmatrix},
   \end{equation}
   where $p_j > 0$ and $x_1, x_2, \dots, x_n$ are the components of $\vect x$.

   Each of these operators is no more than a block-\textsc{ramla} (\textsc{bramla}) iteration over a specific data subset~\cite{brp96,dey01}. A \textsc{bramla} iteration is a generalization of a \textsc{ramla} iteration whereby blocks of data can be used at each subiteration. Under mild smoothness assumptions on the gradients $\nabla f_i$, it is possible to show that
   \begin{equation*}
      \mathcal S_l( \lambda, \vect x ) = \vect x - \lambda D( \vect x )\nabla F_l( \vect x ) + O( \lambda^2 ).
   \end{equation*}
   See, e.g., \cite[Proposition~5.3]{hcc14}. Therefore, by setting weights $\{ \omega_1, \omega_2, \dots, \omega_s \}$ such that $\sum_{l = 1}^s\omega_l = 1$ and $\omega_l \geq 0$, we can finally define the iterative procedure:
   \begin{equation*}
      \viter x{k + 1} = \sum_{l = 1}^s\omega_l \mathcal S_l( \viter xk, \lambda_k ).
   \end{equation*}
   For which, then, there holds
   \begin{equation*}
      \viter x{k + 1} = \viter xk - \lambda_k\nabla \tilde f( \vect x_k ) + O( \lambda_k^2 ),
   \end{equation*}
   where $\tilde f := \sum_{l = 1}^s\omega_l F_l$ is a weighted version of the original objective function $f = \sum_{l = 1}^sF_l$.

   A key, albeit very simple, observation here is that the convergence analysis naturally absorbs an error term, which makes it attractive for the superiorization approach because we can add suitable perturbations to this error term and still maintain a convergent algorithm. In order to do that, the magnitude of the perturbation must be appropriately controlled, which we will analyze theoretically in depth in the next section. The remaining of the present section presents an option to \textsc{saem} and then discusses the three forms of the perturbation $\{ \viter sk \}$ we consider, which we recall that we have named \emph{superiorization sequences}.

   \subsection{New Superiorized Scaled Gradient Algorithm}

   In the present paper, we not only introduce superiorization of well known maximum likelihood algorithms, but we also present a different method with improved theoretical characteristics when compared to \textsc{ramla} and its relatives, while maintaining the good practical performance of these techniques. In order to describe the new algorithmic framework, there is a key idea: 
   that the diagonal scaling matrix cannot have components vanishing when the corresponding entry in the gradient is positive, in order to avoid the possibility of convergence to a non-optimal point when the objective function is convex but not strictly convex. Of course the precise way of interpreting and implementing this statement is very important to the final computational results, and we describe it in details below.

   In the following description we use the same notation for the strings given in Section~\ref{saem-sup}. The concrete algorithm that we propose uses strings similarly to \textsc{saem}, but is stabilized in a sense that it actually approximates a better-scaled iteration. We name it \textsc{ssaem} from Stabilized \textsc{saem}, and the iterations are as follows:
   \begin{equation*}
      \tilde{\mathcal S}_l( \lambda_k, \viter xk ) := \viter y{k, l, \nu( l )},
   \end{equation*}
   where
   \begin{equation*}
      \viter y{k, l, i} = \viter y{k, l, i - 1} - \lambda_k \overline D( \viter y{k, l, i - 1} )\nabla f_{\iota^l_i}( \viter y{k, l, i - 1} ), \quad i \in \{ 1, 2, \dots, \nu( l ) \},
   \end{equation*}
   and
   \begin{equation*}
      \viter y{k, 0} = \viter xk.
   \end{equation*}
   The diagonal scaling matrix $\overline D$ is given componentwise, for $\vect x \in \mathbb R_+^n$, by
   \begin{equation*}
      \overline D( \vect x )_{jj} := \begin{cases}
                     \frac{x_j}{p_j} & \text{if}\quad x_j > \tau\\
                     \frac{\tau}{p_j} & \text{otherwise,}
                 \end{cases}
   \end{equation*}
   where each $p_j$, $j \in \{ 1, 2, \dots, n \}$ is positive.

   The next iterate is computed in two steps. First the averaging:
   \begin{equation*}
      \viter{\tilde x}{k + 1} = \sum_{l = 1}^s\omega_l \tilde{\mathcal S}_l( \lambda_k, \viter xk ).
   \end{equation*}
   and then a componentwise correction:
   \begin{equation*}
      \iter x{k + 1}_j =
         \begin{cases}
             \iter xk_j + \frac{\iter xk_j}\tau\left( \iter{\tilde x}{k + 1}_j - \iter xk_j \right) & \text{if}\quad \iter xk_j \leq \tau \quad \text{and}\quad \iter{\tilde x}{k + 1}_j < \iter xk_j\\
            \iter{\tilde x}{k + 1}_j & \text{otherwise.}\\
         \end{cases}
   \end{equation*}

   We discuss some useful facts about these iterations in preparation for the complete convergence analysis to be provided in the next section.
   \begin{prop}\label{prop:stab_string_property}
      Fix $l \in \{ 1, 2, \dots, s \}$. Suppose that $\iter xk_j \geq 0$ for all $j \in \{ 1, 2, \dots, n \}$, and that, for every $i \in \{ 1, 2, \dots, \nu( l ) \}$, $\{ \nabla f_{\iota^l_i}( \viter y{k, l, i - 1} ) \}_{k \in \mathbb N}$ is bounded, $\nabla f_{\iota^l_i}$ is Lipschitz in
      \begin{equation*}
       C_{l, i} := \text{the closure of the convex hull of } \{ \viter xk, \viter y{k, l, i - 1} \}_{k \in \mathbb N},
      \end{equation*}
      and that $C_{l, i}$ is in turn bounded. Then:
      \begin{equation*}
         \tilde{\mathcal S}_l( \lambda_k, \viter xk ) = \viter xk - \lambda_k\overline D( \viter xk )\left( \nabla F_l( \viter xk ) + O( \lambda_k ) \right).
      \end{equation*}
   \end{prop}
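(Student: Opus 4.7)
The plan is to unroll the recursion, then use the Lipschitz and boundedness assumptions to replace everything inside the sum by its value at $\viter xk$, at a cost of $O(\lambda_k)$ per substitution. Summing the explicit recursion gives
\begin{equation*}
   \tilde{\mathcal S}_l(\lambda_k, \viter xk) = \viter xk - \lambda_k \sum_{i = 1}^{\nu(l)} \overline D(\viter y{k, l, i - 1}) \nabla f_{\iota^l_i}(\viter y{k, l, i - 1}),
\end{equation*}
so the goal reduces to showing that each summand equals $\overline D(\viter xk)\nabla f_{\iota^l_i}(\viter xk) + O(\lambda_k)$.

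First I would establish by a short induction on $i$ that $\viter y{k, l, i - 1} - \viter xk = O(\lambda_k)$, uniformly in $k$. The base case $i = 1$ is trivial. For the inductive step, I use the hypotheses: boundedness of the gradient sequence $\{\nabla f_{\iota^l_i}(\viter y{k, l, i-1})\}_k$ supplies a uniform bound on one factor, and boundedness of $C_{l, i}$ together with $\viter y{k,l,i-1} \in C_{l,i}$ bounds the entries of $\overline D(\viter y{k,l,i-1})$ (since $\overline D_{jj}(\vect x) = \max(x_j, \tau)/p_j$ depends only on the coordinates of $\vect x$, which lie in a bounded set). Telescoping the one-step increments then yields the claimed $O(\lambda_k)$ bound with a constant depending only on the quantities assumed bounded.

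With that control in hand, I would then replace the gradient and the scaling in each summand. The Lipschitz assumption on $\nabla f_{\iota^l_i}$ over $C_{l, i}$ gives
\begin{equation*}
   \nabla f_{\iota^l_i}(\viter y{k, l, i-1}) = \nabla f_{\iota^l_i}(\viter xk) + O(\lambda_k).
\end{equation*}
For the scaling, the key observation is that $\overline D_{jj}(\vect x) = \max(x_j, \tau)/p_j$ is $(1/p_j)$-Lipschitz in $x_j$, so $\overline D$ is globally Lipschitz, which gives $\overline D(\viter y{k, l, i-1}) = \overline D(\viter xk) + O(\lambda_k)$. Multiplying the two expansions and using boundedness of $\overline D(\viter xk)$ and $\nabla f_{\iota^l_i}(\viter xk)$ to absorb the cross terms yields
\begin{equation*}
   \overline D(\viter y{k, l, i - 1}) \nabla f_{\iota^l_i}(\viter y{k, l, i - 1}) = \overline D(\viter xk)\nabla f_{\iota^l_i}(\viter xk) + O(\lambda_k).
\end{equation*}
Summing over $i \in \{1, \dots, \nu(l)\}$ collapses $\sum_i \nabla f_{\iota^l_i}(\viter xk)$ into $\nabla F_l(\viter xk)$, and pulling the constant matrix $\overline D(\viter xk)$ out of the sum (and the $O(\lambda_k)$) gives the asserted identity.

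The only nonroutine step is handling $\overline D$: because its componentwise definition is piecewise, one might worry it is only semi-smooth. The remedy is the rewriting $\overline D_{jj}(\vect x) = \max(x_j, \tau)/p_j$, which makes global Lipschitz continuity (with constant $1/\min_j p_j$) transparent. Once this is noted, the rest is a standard incremental-gradient perturbation argument, and the hypotheses of the proposition are exactly what is needed to absorb each substitution into an $O(\lambda_k)$ term with a $k$-uniform constant.
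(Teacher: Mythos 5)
Your argument is correct and follows essentially the same route as the paper's proof: unroll the recursion, show $\viter y{k,l,i-1}-\viter xk = O(\lambda_k)$ from the boundedness hypotheses, use Lipschitz continuity to replace $\overline D(\viter y{k,l,i-1})\nabla f_{\iota^l_i}(\viter y{k,l,i-1})$ by its value at $\viter xk$ up to $O(\lambda_k)$, and sum (the paper packages the substitution as Lipschitzness of the composite map $\overline D(\cdot)\nabla f_{\iota^l_i}(\cdot)$, whereas you justify it via the $\max(x_j,\tau)/p_j$ representation of $\overline D$, which is if anything more explicit). The only step to state explicitly is the last one: rewriting the additive $O(\lambda_k)$ error in the form $\overline D(\viter xk)\,O(\lambda_k)$ required by the statement needs the diagonal entries of $\overline D(\viter xk)$ to be bounded below by $\tau/p_j>0$, which is exactly the closing remark of the paper's proof.
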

   \begin{proof}
      Notice that the hypothesis ensure that each function $\overline D( \cdot )\nabla f_{\iota^l_i}( \cdot )$ is Lipschitz, say with Lipschitz constant $M$, in $C_i$ and thus, if we define
      \begin{equation*}
         \begin{split}
            \viter\epsilon{k, l} & {}:= \overline D( \viter xk )\nabla F_l( \viter xk ) - \sum_{i = 1}^{\nu( l )}\overline D( \viter y{k, l, i - 1} )\nabla f_{\iota^l_i}( \viter y{k, l, i - 1} )\\
            & {}= \sum_{i = 1}^{\nu( l )}\left\{ \overline D( \viter xk )\nabla f_{\iota^l_i}( \viter xk ) - \overline D( \viter y{k, l, i - 1} )\nabla f_{\iota^l_i}( \viter y{k, l, i - 1} ) \right\},
         \end{split}
      \end{equation*}
      we then have
      \begin{equation}\label{eq:eps_Lips}
         \begin{split}
            \| \viter\epsilon{k, l} \| & {}\leq \sum_{i = 1}^{\nu( l )}\left\| \overline D( \viter xk )\nabla f_{\iota^l_i}( \viter xk ) - \overline D( \viter y{k, l, i - 1} )\nabla f_{\iota^l_i}( \viter y{k, l, i - 1} ) \right\|\\
            & {}\leq M\sum_{i = 1}^{\nu( l )}\| \viter xk - \viter y{k, l, i - 1} \|.
         \end{split}
      \end{equation}
      Now, we observe that the boundedness assumptions imply that there is a real number $N$ large enough such that
      \begin{equation*}
         \| \viter xk - \viter y{k, l, i - 1} \| \leq (i - 1)\lambda_kN.
      \end{equation*}
      Therefore, using this inequality in~\eqref{eq:eps_Lips} we conclude that
      \begin{equation}\label{eq:eps_bound}
         \| \viter\epsilon{k, l} \| \leq \lambda_kNM\frac{\nu( l )( \nu( l ) - 1 )}2.
      \end{equation}

      Thus, from the definition of the method, the definition of $\viter\epsilon{k, l}$ and~\eqref{eq:eps_bound}:
      \begin{equation*}
         \begin{split}
            \tilde{\mathcal S}_l( \lambda_k, \viter xk ) & {}= \viter xk - \lambda_k\sum_{i = 1}^{\nu( l )}\overline D( \viter y{k, l, i - 1} )\nabla f_{\iota^l_i}( \viter y{k, l, i - 1} )\\
            & {}= \viter xk - \lambda_k \overline D( \viter xk )\nabla F_l( \viter xk ) + \lambda_k\viter\epsilon{k, l}\\
            & {}= \viter xk - \lambda_k \overline D( \viter xk )\nabla F_l( \viter xk ) + O( \lambda_k^2 ).
         \end{split}
      \end{equation*}
      Finally, because the diagonal elements of diagonal matrix $\overline D( \viter xk )$ are positive and bounded from below by construction, the claim follows.
   \end{proof}

   Let us introduce the simplifying notation
   \begin{equation*}
      \mathcal I := \bigl\{ ( l, i ) : l \in \{ 1, 2, \dots, s \}\quad\text{and}\quad i \in \{ 1, 2, \dots, \nu( l ) \} \bigr\}.
   \end{equation*}
   \begin{coro}\label{coro:non_norm_iter}
      Suppose $\iter xk_j \geq 0$ for all $j \in \{ 1, 2, \dots, n \}$, and that, for every $( l, i ) \in \mathcal I$, $\{ \nabla f_{\iota^l_i}( \viter y{k, l, i - 1} ) \}_{k \in \mathbb N}$ is bounded, $\nabla f_{\iota^l_i}$ is Lipschitz in
      \begin{equation*}
       C_{l, i} := \text{the closure of the convex hull of } \{ \viter xk, \viter y{k, l, i - 1} \}_{k \in \mathbb N},
      \end{equation*}
      and that $C_{l, i}$ is in turn bounded. Then:
      \begin{equation*}
         \viter{\tilde x}{k + 1} = \viter xk - \lambda_k\overline D( \viter xk )\left( \nabla \tilde f( \viter xk ) + O( \lambda_k ) \right).
      \end{equation*}
   \end{coro}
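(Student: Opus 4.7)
The plan is to derive the corollary as essentially a linear combination of the conclusions of Proposition~\ref{prop:stab_string_property} applied to each string individually. The hypotheses of the corollary are stated so that, for every fixed $l \in \{1, 2, \ldots, s\}$, the hypotheses of Proposition~\ref{prop:stab_string_property} are satisfied verbatim (the hypothesis on $C_{l,i}$ carries over unchanged, since it is the same set in both statements). So the first step is simply to quote Proposition~\ref{prop:stab_string_property} for each $l$ to obtain
\begin{equation*}
   \tilde{\mathcal S}_l( \lambda_k, \viter xk ) = \viter xk - \lambda_k \overline D( \viter xk )\nabla F_l( \viter xk ) + \lambda_k \overline D( \viter xk ) \viter{r}{k,l},
\end{equation*}
where $\viter{r}{k,l}$ is an $O(\lambda_k)$ remainder (this is the content of writing the $O(\lambda_k)$ inside the parentheses in the statement of the proposition).

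Next, I would multiply this identity by $\omega_l$ and sum over $l \in \{1, 2, \ldots, s\}$. Since $\sum_{l = 1}^s \omega_l = 1$, the leading $\viter xk$ terms combine into a single $\viter xk$. Since the factor $\overline D( \viter xk )$ is independent of $l$, it can be pulled outside the sum, and by linearity of the gradient combined with the definition $\tilde f := \sum_{l=1}^s \omega_l F_l$, we have $\sum_{l = 1}^s \omega_l \nabla F_l( \viter xk ) = \nabla \tilde f( \viter xk )$. This gives
\begin{equation*}
   \viter{\tilde x}{k+1} = \viter xk - \lambda_k \overline D( \viter xk ) \nabla \tilde f( \viter xk ) + \lambda_k \overline D( \viter xk ) \sum_{l = 1}^s \omega_l \viter{r}{k, l}.
\end{equation*}

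Finally, the residual $\sum_{l = 1}^s \omega_l \viter{r}{k,l}$ is a finite convex combination of $O(\lambda_k)$ terms and is therefore itself $O(\lambda_k)$, which yields the claimed form. I do not expect a real obstacle here: the entire argument is a linearity-of-summation calculation, and the only mildly nontrivial point is verifying that the $O(\lambda_k)$ error produced by Proposition~\ref{prop:stab_string_property} really can be written with $\overline D(\viter xk)$ factored on its left (so that after summation the common scaling matrix still multiplies everything), but this is exactly how the proposition is phrased, so the combination is immediate.
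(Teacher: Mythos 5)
Your proposal is correct and follows essentially the same route as the paper's own proof: apply Proposition~\ref{prop:stab_string_property} to each string, take the $\omega_l$-weighted sum, use $\sum_{l=1}^s\omega_l = 1$ and linearity of the gradient together with $\tilde f = \sum_{l=1}^s\omega_l F_l$, and absorb the finite convex combination of $O(\lambda_k)$ remainders into a single $O(\lambda_k)$ term. Nothing is missing.
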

   \begin{proof}
      Use the definition of $\viter{\tilde x}{k + 1}$ and Proposition~\ref{prop:stab_string_property} in order to get
      \begin{equation*}
         \begin{split}
            \viter{\tilde x}{k + 1} & {}= \sum_{l = 1}^s\omega_l \tilde{\mathcal S}_l( \lambda_k, \viter xk ) = \viter xk - \lambda_k\sum_{l = 1}^s\omega_l\overline D( \viter xk )\left( \nabla F_l( \viter xk ) + O( \lambda_k ) \right)\\
            & {}= \viter xk - \lambda_k\overline D( \viter xk )\left( \sum_{l = 1}^s\omega_l\nabla F_l( \viter xk ) + \sum_{l = 1}^s\omega_lO( \lambda_k ) \right),
         \end{split}
      \end{equation*}
      from where the claim follows.
   \end{proof}

   Corollary~\ref{coro:non_norm_iter} actually says that the iterations before renormalization satisfy
   \begin{equation*}
      \viter{\tilde x}{k + 1} = \viter xk - \lambda_k\overline D( \viter xk )\viter gk,
   \end{equation*}
   where
   \begin{equation*}
      \viter gk = \nabla \tilde f( \viter xk ) + O( \lambda_k ).
   \end{equation*}
   Thus, given the fact that $\overline D( \viter xk )_{jj} \geq 0$, and by the way the normalization is done, we have
   \begin{equation*}
      \viter x{k + 1} = \viter xk - \lambda_k\tilde D( \viter xk, \viter gk )\viter gk,
   \end{equation*}
   where the diagonal scaling matrix is given by
   \begin{equation*}
      \tilde D( \vect x, \vect g )_{jj} = \begin{cases}
                                                \frac{\tau}{p_j} & \text{if}\quad x_j \leq \tau\quad\text{and}\quad g_j \leq 0\\
                                                \frac{x_j}{p_j} & \text{otherwise.}
                                              \end{cases}
   \end{equation*}

   \subsection{Superiorization Sequences}

   The main advantage of a superiorization technique is that there is a lot of freedom for the sequence of perturbations to be added to the traditional optimization method. It is appealing to obtain a convergent method for a maximum likelihood solution, but, in practice, when iterated to full convergence, the maximum likelihood solution in emission or low count transmission tomography is degraded by noise. Therefore, and given the high computational cost of iterative techniques, one common way of smoothing the results is to stop the algorithm prematurely, having started it from a very smooth initial image.

   While reasonably successful, the early stopping strategy cannot provide images as good as explicit regularization of the objective function in a model such as
   \begin{equation}\label{eq:regprob}
      \begin{split}
         \min & \quad f( \vect x ) + \zeta r( \vect x )\\
         \st  & \quad \vect x \in \mathbb R_+^n,
      \end{split}
   \end{equation}
   where $\zeta > 0$ is the \emph{regularization parameter}, and $r : \mathbb R^n \to \mathbb R$ is a function that penalizes roughness, or otherwise undesirable features, in the image. More precisely, by denoting $\vect x_\zeta$ the optimizer of problem~\eqref{eq:regprob}, it is possible to see that
   \begin{equation*}
      r( \vect x_\zeta ) = \min \{ r( \vect x ) : \vect x \in \mathbb R_+^n, f( \vect x ) = f( \vect x_\zeta ) \}.
   \end{equation*}

   That is, explicit regularization in the optimization model will provide the smoothest possible image among those with the same fitness to data. Although this is a desirable feature, there remains the problem of selecting a proper regularization parameter $\zeta$ under a reasonable computational effort. This task would demand the solution of problem~\eqref{eq:regprob} for several tentative values of the regularization parameter, which would therefore lead to a heavy computational overhead to the reconstruction process.

   We claim that the superiorization framework can be useful in this setting, because by a proper selection of the regularization sequence used to perturb the unregularized method we can obtain iterates that are closer to the optimal $r$ versus $f$ curve traced by the explictly regularized optimization method. This could potentially lead to improved results, for example, when consistency-based early stopping strategies are used.

   For illustrative purposes, we will focus on $r = TV$, the Total Variation functional:
   \begin{equation*}
      TV( \vect x ) := \sum_{i = 1}^{\sqrt n}\sum_{j = 1}^{\sqrt n}\sqrt{ ( x_{i, j} - x_{i - 1, j} )^2 + ( x_{i, j} - x_{i, j - 1} )^2 },
   \end{equation*}
   where we have used the simplifying assumption that the reconstructed images are square (which can be readily dropped), the lexicographic ordering of elements, and a periodic boundary condition:
   \begin{equation*}
      x_{0, j} := x_{n, j} \quad\text{and}\quad x_{i, 0} := x_{i, n}.
   \end{equation*}

   \subsubsection{Standard superiorization procedure}

   \begin{algorithm}[t]
      \begin{algorithmic}[1] 
         \item
         {\textbf{set} $k \leftarrow 0$}  \\
         {\textbf{set} $\boldsymbol{x}^{\left(0\right)}$ as initial solution} \\
         {\textbf{while }$ f( \viter xk)  > \varepsilon$   } \\
         {~~~~\textbf{set $\viter x{k + 1 / 2} \leftarrow \mathcal O( \lambda_k, \viter xk ) $}} \\
         {\label{sup-part}~~~~\textbf{set} $\viter x{k+1} \leftarrow    \vect R_{r} (\viter x{k + 1 / 2} )$}\\
         {~~~~\textbf{set $k\leftarrow k+1$}} \\
         {\textbf{return} $\boldsymbol{x}^{\left(k\right)}$}
      \end{algorithmic}
      \caption{General Superiorized Version of Algorithm $\mathcal O( \viter xk ) $}
      \label{gen-super}
   \end{algorithm}

      The general superiorized algorithm \eqref{eq:algo_sup} is exposed in Algorithm~\ref{gen-super}, where it is implicit in the superiorization step $\vect R_{r} (\viter x{k + 1 / 2})$  the procedure for obtaining the superiorized sequence $\viter sk = \vect R_{r} (\viter x{k + 1 / 2}) - \viter x{k + 1 / 2}$. It is expected that the superiorization step improves the solution moving the current iterate towards the minimum of $r( \vect x )$. Also, note that one could choose $\viter xk$ or $\viter x{k + 1 / 2}$ as the reconstructed image, it is just a matter of visual preference\footnote{In \cite{Garduno2014}, the reconstructed image is $\viter x{k + 1 / 2}$, and it used in the stopping criteria $ f( \viter x{k + 1 / 2})  > \varepsilon$. }. Algorithm~\ref{super-seq} shows the method proposed in \cite{Garduno2014} to produce a sequence which it is called here the \emph{standard} superiorization sequence. In Algorithm~\ref{super-seq}, $\beta_0 > 0$ and $\alpha \in (0,1)$, and $\beta_0\alpha^k$ is the starting trial stepsize at iteration $k$, which is repeatedly reduced by a factor of $\alpha$ until the decrease criterion is satisfied.

   \begin{algorithm}[t]
      \begin{algorithmic}[1]
         \item
         {~~\textbf{set} $\ell \leftarrow k$} \\
         {~~\textbf{set} $n \leftarrow 0$} \\
         {~~\textbf{set} $\viter bn \leftarrow \viter x{k + 1 / 2}$}\\
         {~~\textbf{while $n<N$}}\\
         {\label{Line:NAV}~~~~~~\textbf{set} $ \viter v{n}$ to be a \textbf{\textit{nonascending}} vector for $r$ at $\viter bn$} \\
         {~~~~~~\textbf{set} $loop \leftarrow true$} \\
         {~~~~~~\textbf{while} $loop$}\\
         {~~~~~~~~~~\textbf{set $\ell \leftarrow\ell+1$}}\\
         {~~~~~~~~~~\textbf{set} $\beta_n \leftarrow\beta_{0}\times\alpha^{\ell}$}\\
         {~~~~~~~~~~\textbf{set} $\vect z \leftarrow \viter b{n} + \beta_n \viter v{n} $} \\
         {~~~~~~~~~~\textbf{if $r\left( \vect z \right) \leq r\left( \viter x{k + 1 / 2} \right)$ then}}\\
         {~~~~~~~~~~~~~~\textbf{set $n \leftarrow n+1$}}\\
         {~~~~~~~~~~~~~~\textbf{set $ \viter b{n} \leftarrow  \vect z $}}\\
         {\label{Line:NAV2}~~~~~~~~~~~~~~\textbf{set  }\emph{$loop\leftarrow false$}}\\
         {\textbf{return} $ \viter x{k + 1 } \leftarrow \viter bn $} 
      \end{algorithmic}
      \caption{Standard superiorization procedure to obtain $\viter x{k + 1 }$ from point $\viter x{k + 1 / 2}$ }
      \label{super-seq}
   \end{algorithm}

   \subsubsection{Projected subgradient iteration}

   A well known algorithm used for the minimization of convex, non-differentiable, functions under constraints is the projected subgradient iteration, where the iteration operator has the form
   \begin{equation*}
      \mathcal S( \vect x, \lambda ) := \vect x - \lambda \tilde\nabla r( \vect x ).
   \end{equation*}
   Here, $r : \mathbb R^n \to \mathbb R$ is the function to be minimized, $\tilde\nabla r( \vect x ) \in \partial r( \vect x )$ and
   \begin{equation*}
      \partial r( \vect x ) := \{ \vect v \in \mathbb R^n : r( \vect x ) + \nabla \vect v^T( \vect y - \vect x ) \leq r( \vect y ) \}
   \end{equation*}
   is the subdifferential of $r$ at $\vect x$. In our application, the subgradient-based superiorization operator is given as repeated application of a subgradient step under a diminishing stepsize scheme, followed by projection onto the non-negative orthant:
   \begin{equation}\label{eq:proj_subgrad}
      \begin{split}
         \viter y0 & {}= \vect x\\
         \viter yi & {}= \viter y{i - 1} - \frac \gamma i\tilde\nabla r( \viter y{i - 1} ), \quad i = 1, 2, \dots, N\\
         R_r( \vect x ) & {}= \mathfrak P_{\mathbb R_+^n}( \viter yN ),
      \end{split}
   \end{equation}
   where $\mathfrak P_{X}$ is, for a non-empty convex and closed set $X$, the projection:
   \begin{equation*}
      \mathfrak P_{X}( \vect x ) = \argmin_{\vect y \in X}\| \vect x - \vect y \|.
   \end{equation*}
   We will omit the starting stepsize $\gamma$ from the notation, but it is implicitly assumed that each application of the technique has a well defined value for the parameter $\gamma > 0$. When considered as part of a iterative scheme, a sequence $\{ \gamma_k \}$ is assumed.

   The subgradient $\vect t \in \partial TV( \vect x )$ we have used in this superiorization scheme and as a nonascending direction for the standard superiorization procedure is given componentwise by:
   \begin{multline*}
      t_{i,j} = \frac{2x_{i, j} - x_{i, j - 1} - x_{i - 1, j}}{\sqrt{( x_{i, j} - x_{i, j - 1} )^2 + ( x_{i, j} - x_{i - 1, j} )^2}} +{}\\
      \frac{x_{i, j} - x_{i, j + 1}}{\sqrt{( x_{i, j + 1} - x_{i, j} )^2 + ( x_{i, j + 1} - x_{i - 1, j + 1} )^2}} +{}\\
      \frac{x_{i, j} - x_{i + 1, j}}{\sqrt{( x_{i + 1, j} - x_{i, j} )^2 + ( x_{i + 1, j} - x_{i + 1, j - 1} )^2}},
   \end{multline*}
   where the fractions that have denominator equal to $0$ are ignored in the summation.

   \subsubsection{Fast Gradient Projection}

   In \cite{bet09b}, Beck and Teboulle proposed an accelerated method to compute the proximal operator for positively constrained Total Variation given by:
   \begin{equation}
      \prox_{TV,\mathbb R_+^n}( \vect b ) := \arg \min_{\vect x \in \mathbb R_+^n }  \|  \vect x - \vect b   \|^2 +  \gamma TV( \vect x ).
      \label{eq:TV-PROX}
   \end{equation}
   The approach in \cite{bet09b} is based on the dual formulation proposed by Chambolle \cite{Chambolle2004}, where $TV$ is written as:
   \begin{equation}
      TV( \vect x ) := \max_{ \vect p, \vect q \in \mathcal{P} }  T(\vect x, \vect p, \vect q),
      \label{eq:TV-dual-1}
   \end{equation}
   where:
   \begin{equation*}
       T(\vect x, \vect p, \vect q) := \sum_{i = 1}^{\sqrt n}\sum_{j = 1}^{\sqrt n}  p_{i, j}( x_{i, j} - x_{i - 1, j} ) +  q_{i, j}( x_{i, j} - x_{i, j - 1} ) 
      \label{eq:TV-dual-2}
   \end{equation*}
   and $\mathcal{P}$ is a set in which $ \vect p$ and  $\vect q$ satisfy:
   \begin{equation*}
            p_{i, j}^2 + q_{i, j}^2 \leq 1 \quad  \text{for} \quad  1 \leq i , j \leq {\sqrt n}.
         \label{eq:TV-dual-3}
   \end{equation*}
   This way, \eqref{eq:TV-PROX} can be written as:
   \begin{equation}
         \prox_{TV,\mathbb R_+^n} ( \vect b ) := \arg \min_{\vect x \in \mathbb R_+^n } \max_{ \vect p, \vect q \in \mathcal{P} }  \|  \vect x - \vect b   \|^2  + \gamma  (\vect p, \vect q)^T \mathcal{D} \vect x,
        \label{eq:DTV-PROX}
   \end{equation}
   where $\mathcal D$ is the linear operator that takes $\vect x$ and returns the pair $\vect u$ and $\vect v$ such that
   \begin{equation*}
      u_{i, j} = x_{i, j} - x_{i - 1, j} \quad \text{and} \quad v_{i, j} = x_{i, j} - x_{i, j - 1}.
   \end{equation*}

   Equation \eqref{eq:DTV-PROX} is solved with gradient projection algorithm in \cite{Chambolle2004}, and with a fast gradient projection (\textsc{fgp}) in \cite{bet09b}, the approach we are utilizing in this paper to produce the superiorized sequence. Notice that applying \eqref{eq:DTV-PROX} just after the main algorithm: 
   \begin{equation*}
         \viter x{k + 1}  =   \prox_{TV,\mathbb R_+^n} ( \mathcal O( \lambda_k, \viter xk )) = \mathcal O( \lambda_k, \viter xk )  + \viter sk,
   \end{equation*}
   produces the superiorized sequence. Again, we avoid cluttering the notation by omitting the positive parameter $\gamma$.

   \section{Theoretical Convergence Analysis}

   \subsection{SAEM-SUP}

   While the concrete form of our algorithm is based on the perturbed \textsc{saem} as described in the previous section, we will focus in a more general diagonally scaled gradient descent algorithm of the form
   \begin{equation}
      \viter x{k + 1} = \viter xk - \lambda_kD( \viter xk )\nabla f( \viter xk ) + o( \lambda_k ),
   \end{equation}
   where the diagonal scaling matrix $D( \vect x )$ is given in~\eqref{eq:diagmatrix} and
   \begin{equation*}
      \lim_{\lambda \to 0}\frac{\| o( \lambda ) \|}{\lambda} = 0.
   \end{equation*}

   It will be useful to separate the algorithm in two steps, as with Algorithm~\ref{gen-super}:
   \begin{equation}\label{eq:algo_2step}
      \begin{split}
         \viter x{k + 1 / 2} &= \mathcal O( \lambda_k, \viter xk )\\
         \viter x{k + 1} &= \viter x{k + 1 / 2}  + \viter sk.
      \end{split}
   \end{equation}
   We will assume in this subsection that the algorithm is able to maintain positivity for both subsequences $\{ \viter x{k + 1 / 2} \}$ and $\{ \viter xk \}$. This will be the case for our techniques and is easy to achieve when using the diagonal scaling matrix $D( \vect x )$ with the gradient descent direction in the first step and a projected subgradient or constrained proximal step in the second part of the algorithm.


   \begin{prop}\label{prop:conv_weak}
      Consider the iterative procedure described in~\eqref{eq:algo_2step}. Assume the sequences $\{ \viter xk \}$ and $\{ \viter x{k + 1 / 2} \}$ are bounded and positive, and that $\{ \nabla f( \viter xk ) \}$ and $\{ \nabla f( \viter x{k + 1 / 2} ) \}$ are bounded. Suppose also that the first step operator satisfies
      \begin{equation}\label{eq:ass_step1}
         \mathcal O( \lambda, \vect x ) = \vect x - \lambda D( \vect x )\nabla f( \vect x ) + o( \lambda ),
      \end{equation}
      and that
      \begin{equation*}
         \lambda_k \to 0^+,\quad \sum_{k = 0}^\infty \lambda_k = \infty,\quad\text{and}\quad\frac{\| \viter sk \|}{\lambda_k} \to 0.
      \end{equation*}
      Then either there is a subsequence $\{ \viter x{l_k} \}$ satisfying $D( \viter x{l_k} )\nabla f( \viter x{l_k} ) \to \vect 0$ or we have $f( \viter xk ) \to -\infty$.

   \end{prop}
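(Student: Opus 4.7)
The plan is to combine the two substeps into a single recursion with a perturbation of order $o(\lambda_k)$, derive a diminishing-stepsize descent inequality for $f$, and then obtain the dichotomy by a contradiction argument. Using assumption~\eqref{eq:ass_step1} together with $\|\viter sk\|/\lambda_k \to 0$, set
\begin{equation*}
   \viter ek := \bigl( \viter x{k + 1 / 2} - \viter xk + \lambda_k D(\viter xk)\nabla f(\viter xk) \bigr) + \viter sk,
\end{equation*}
so that $\viter x{k + 1} = \viter xk - \lambda_k D(\viter xk)\nabla f(\viter xk) + \viter ek$ with $\|\viter ek\|/\lambda_k \to 0$. Note also that since $\{\viter xk\}$ is bounded and positive and each $p_j > 0$, the diagonal entries of $D(\viter xk)$ are uniformly bounded from above by some constant $M > 0$, and $\|\viter x{k + 1} - \viter xk\| = O(\lambda_k) \to 0$.

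Next, I would establish a descent lemma that uses only uniform continuity of $\nabla f$, not Lipschitz regularity. Let $C$ be the closure of the convex hull of $\{\viter xk\} \cup \{\viter x{k + 1/2}\}$, which is compact, and let $\omega(\cdot)$ denote the modulus of continuity of $\nabla f$ on $C$. Writing
\begin{equation*}
   f(\viter x{k + 1}) - f(\viter xk) = \nabla f(\viter xk)^T(\viter x{k + 1} - \viter xk) + \int_0^1 \bigl[\nabla f(\viter xk + t(\viter x{k + 1} - \viter xk)) - \nabla f(\viter xk)\bigr]^T (\viter x{k + 1} - \viter xk)\, dt,
\end{equation*}
the first term equals $-\lambda_k \|D(\viter xk)^{1/2}\nabla f(\viter xk)\|^2 + \nabla f(\viter xk)^T \viter ek$, which is $-\lambda_k \|D(\viter xk)^{1/2}\nabla f(\viter xk)\|^2 + o(\lambda_k)$ because $\{\nabla f(\viter xk)\}$ is bounded and $\|\viter ek\| = o(\lambda_k)$; the second term is bounded by $\omega(\|\viter x{k + 1} - \viter xk\|) \cdot \|\viter x{k + 1} - \viter xk\| = o(\lambda_k)$. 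Hence
\begin{equation*}
   f(\viter x{k + 1}) - f(\viter xk) \leq -\lambda_k \|D(\viter xk)^{1/2}\nabla f(\viter xk)\|^2 + o(\lambda_k).
\end{equation*}

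Finally, suppose for contradiction that no subsequence of $D(\viter xk)\nabla f(\viter xk)$ tends to $\vect 0$. Then there exist $\delta > 0$ and $k_0 \in \mathbb N$ such that $\|D(\viter xk)\nabla f(\viter xk)\| \geq \delta$ for all $k \geq k_0$. Since the diagonal entries of $D(\viter xk)$ are bounded by $M$,
\begin{equation*}
   \|D(\viter xk)^{1/2}\nabla f(\viter xk)\|^2 \geq \frac{\|D(\viter xk)\nabla f(\viter xk)\|^2}{M} \geq \frac{\delta^2}{M}.
\end{equation*}
For $k$ large enough the $o(\lambda_k)$ term is dominated by $\lambda_k \delta^2/(2M)$, so telescoping yields $f(\viter x{k + 1}) \leq f(\viter x{k_0}) - \tfrac{\delta^2}{2M}\sum_{j = k_0}^k \lambda_j$, and divergence of $\sum \lambda_k$ forces $f(\viter xk) \to -\infty$, completing the dichotomy.

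The main obstacle is the descent lemma: without assuming a Lipschitz gradient it is necessary to exploit uniform continuity of $\nabla f$ on the compact closure of the iterates, and to balance the $o(\lambda_k)$ contributions coming from the modulus-of-continuity correction against the negative quadratic term. The remaining delicacy is the passage from a lower bound on $\|D(\viter xk)\nabla f(\viter xk)\|$ to a lower bound on $\|D(\viter xk)^{1/2}\nabla f(\viter xk)\|^2$, which relies crucially on the uniform upper bound on the diagonal of $D(\viter xk)$ afforded by boundedness of $\{\viter xk\}$.
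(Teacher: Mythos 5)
Your proposal is correct and follows essentially the same route as the paper's proof: assume no subsequence of $D( \viter xk )\nabla f( \viter xk )$ vanishes, derive the per-iteration descent estimate $f( \viter x{k+1} ) \leq f( \viter xk ) - \lambda_k\| D( \viter xk )^{1/2}\nabla f( \viter xk ) \|^2 + o( \lambda_k )$, and telescope using $\sum_k \lambda_k = \infty$. The only differences are presentational: you justify the Taylor step explicitly via the modulus of continuity of $\nabla f$ on the compact hull of the iterates (the paper merely invokes ``smoothness and boundedness''), and you pass from the bound on $\| D\nabla f \|$ to one on $\| D^{1/2}\nabla f \|^2$ through the uniform upper bound on the diagonal of $D$, where the paper instead asserts the equivalence of the two subsequential limits.
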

   \begin{proof}
      Let us assume that there is no subsequence $\{ \viter x{l_k} \}$ satisfying the limit $D( \viter x{l_k} )\nabla f( \viter x{l_k} ) \to \vect 0$. Notice that this statement is equivalent to saying that there is no subsequence such that $D( \viter x{l_k} )^{1 / 2}\nabla f( \viter x{l_k} ) \to \vect 0$. In this case, there is $\epsilon > 0$ such that $\| D( \viter x{k} )^{1 / 2}\nabla f( \viter x{k} ) \|^2 \geq \epsilon$ for every $k$.

      Therefore, we have
      \begin{equation*}
         \begin{split}
            f( \viter x{k + 1} ) & {}= f\bigl( \viter xk - \lambda_kD( \viter xk )\nabla f( \viter xk ) + o( \lambda_k ) + \viter sk \bigr) \\
            &{}= f\bigl( \viter xk - \lambda_kD( \viter xk )\nabla f( \viter xk ) + o( \lambda_k ) \bigr) \\
            &{}= f( \viter xk ) - \lambda_k\nabla f( \viter xk )^TD( \viter xk )\nabla f( \viter xk ) + o( \lambda_k ) \\
            &{}= f( \viter xk ) - \lambda_k\| D( \viter xk )^{1 / 2}\nabla f( \viter xk ) \|^2 + o( \lambda_k ) \\
            &{}\leq f( \viter xk ) - \lambda_k\epsilon + o( \lambda_k ),
         \end{split}
      \end{equation*}
      where the first equality is obtained replacing $\viter x{k + 1}$ according to the definition of algorithm in~\eqref{eq:algo_2step} and assumption~\eqref{eq:ass_step1}, the second equality comes from $\| \viter sk \|/\lambda_k \to 0$ and the third from the smoothness and boundedness assumptions which allow us to use the Taylor expansion of $f$ around $f( \viter xk )$ in order to estimate $f( \viter x{k + 1} )$. The fourth equation is merely formal and the last equation finally comes from $\| D( \viter x{k} )^{1 / 2}\nabla f( \viter x{k} ) \|^2 \geq \epsilon > 0$.

      Now, let $\kappa \in \mathbb N$ be such that $k > \kappa \Rightarrow o( \lambda_k ) / \lambda_k < \epsilon / 2$. Then, iterating the above inequality from $\viter x\kappa$ we get:
      \begin{equation*}
         \begin{split}
            f( \viter x{\kappa + n} ) & {}\leq f( \viter x\kappa ) - \sum_{k = 0}^{n - 1}\lambda_{\kappa + k}\left\{\epsilon - \frac{o( \lambda_{\kappa + k} )}{\lambda_{\kappa + k}}\right\}\\
            & {}\leq f( \viter x\kappa ) - \frac\epsilon2\sum_{k = 0}^{n - 1}\lambda_{\kappa + k}.
         \end{split}
      \end{equation*}
      From this inequality we see that $\sum_{k = 0}^\infty \lambda_k = \infty$ implies that $f( \viter xk ) \to -\infty$.
   \end{proof}

   Now we use the convexity of the objective function and an extra a priori assumption in order to obtain global convergence of the algorithm. We shall not give the proofs here, instead referring, respectively, to~\cite[Propositions~7~and~5]{hed05} for demonstrations.

   \begin{prop}
      Suppose that beyond the assumptions of Proposition~\ref{prop:conv_weak}, we further have that $f( \viter xk )$ converges and that $f$ is strictly convex. Then
      \begin{equation*}
         f( \viter xk ) \to f^*,
      \end{equation*}
      where $f^*$ is the optimal value of $f$ over $\mathbb R_+^n$.
   \end{prop}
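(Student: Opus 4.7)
The natural approach is to combine the subsequential stationarity from Proposition~\ref{prop:conv_weak} with the convexity of $f$ to upgrade it to convergence of the function values. Since $f(\viter xk)$ is assumed to converge, it is in particular bounded below, so Proposition~\ref{prop:conv_weak} supplies a subsequence $\{\viter x{l_k}\}$ with $D(\viter x{l_k})\nabla f(\viter x{l_k}) \to \vect 0$. Because $\{\viter xk\}$ is bounded, I can pass to a further subsequence (still denoted $\{\viter x{l_k}\}$) converging to some $\vect x^* \in \mathbb R_+^n$, and continuity of $D$ and $\nabla f$ gives $D(\vect x^*)\nabla f(\vect x^*) = \vect 0$.

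The goal is then to show that $\vect x^*$ is a global minimizer of $f$ on $\mathbb R_+^n$, which, combined with continuity and the assumed convergence of $\{f(\viter xk)\}$, forces $f(\viter xk) \to f(\vect x^*) = f^*$. Written componentwise, $D(\vect x^*)\nabla f(\vect x^*) = \vect 0$ amounts to the complementarity condition $x^*_j \nabla f(\vect x^*)_j / p_j = 0$ for every $j$. Coupled with the sign condition $\nabla f(\vect x^*)_j \geq 0$ at coordinates where $x^*_j = 0$, this is exactly the KKT system for minimization over the positive orthant, and convexity of $f$ then upgrades stationarity to global optimality.

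The main obstacle is precisely the sign condition $\nabla f(\vect x^*)_j \geq 0$ at active coordinates: nothing in $D(\vect x^*)\nabla f(\vect x^*) = \vect 0$ on its own excludes $\nabla f(\vect x^*)_j < 0$ when $x^*_j = 0$, because the multiplicative scaling $x_j/p_j$ annihilates the descent direction there. To rule this out, I would argue by contradiction: if $x^*_j = 0$ and $\nabla f(\vect x^*)_j < 0$, then the $j$-th partial derivative stays bounded away from zero and negative along $\{\viter x{l_k}\}$, so a small move from $\vect x^*$ in direction $\vect e_j$ gives a feasible point with $f$-value strictly below $f(\vect x^*) = \lim_k f(\viter xk)$. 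Combining this with the per-iteration descent inequality already derived in the proof of Proposition~\ref{prop:conv_weak}, together with the boundedness of the iterates and $\sum_k \lambda_k = \infty$, should produce iterates whose $f$-values drop below $\lim_k f(\viter xk)$, contradicting convergence. The authors defer exactly this step to~\cite{hed05}, and this is where a self-contained write-up would require the most care.

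Finally, strict convexity of $f$ simplifies the endgame: any two cluster points of $\{\viter xk\}$ share the limit value $\ell := \lim_k f(\viter xk)$, so by strict convexity they must coincide, and the bounded sequence $\{\viter xk\}$ actually converges to $\vect x^*$. Combining this with the optimality of $\vect x^*$ established above yields $f(\viter xk) \to f(\vect x^*) = f^*$, completing the argument.
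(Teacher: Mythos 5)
Your overall architecture is reasonable up to the decisive step, but that step --- establishing that the cluster point $\vect x^*$ with $D(\vect x^*)\nabla f(\vect x^*) = \vect 0$ is actually a global minimizer --- is exactly where the proof lives, and your proposed argument for it does not close. The contradiction you sketch (a feasible point with smaller $f$-value exists near $\vect x^*$, hence the descent inequality plus $\sum_k \lambda_k = \infty$ should drive $f(\viter xk)$ below $\lim_k f(\viter xk)$) fails because the per-iteration decrease guaranteed by the proof of Proposition~\ref{prop:conv_weak} is proportional to $\| D(\viter xk)^{1/2}\nabla f(\viter xk)\|^2$, and this quantity tends to zero along your subsequence precisely because the multiplicative scaling $x_j/p_j$ annihilates the $j$-th (negative) gradient component as $\iter{x_j}{l_k} \to 0$. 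The mere existence of a better feasible point in the direction $\vect e_j$ does not force the algorithm to move there; converging to such a non-optimal point that is stationary for the scaled dynamics is exactly the failure mode the paper itself warns about (``there is no way to guarantee that the error term is small compared to the components of the iterates in case $\iter{x_j}k \to 0$''). Note also that your contradiction argument nowhere uses strict convexity, so if it were valid the proposition would hold for merely convex $f$, which is inconsistent with the paper's own discussion of why the hypothesis is needed.

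The paper gives no proof of this statement, deferring to \cite[Proposition~7]{hed05}; its surrounding discussion indicates the actual mechanism is different from yours: one shows that a non-optimal limit point would force the existence of a second, distinct limit point, with both points being optimizers of the original problem with added constraints of the form $x_j = 0$, and strict convexity then forces the two to coincide. That is where strict convexity genuinely enters --- not in your ``endgame,'' where the claim that two cluster points sharing the value $\ell = \lim_k f(\viter xk)$ ``must coincide by strict convexity'' is also incorrect as stated: strict convexity gives uniqueness of minimizers, not injectivity on level sets, so that step is only valid once optimality of both cluster points has already been established. Finally, convergence of the iterates is not needed for the stated conclusion: once optimality of a single cluster point $\vect x^*$ is proved, continuity together with the assumed convergence of $f(\viter xk)$ already gives $\lim_k f(\viter xk) = f(\vect x^*) = f^*$.
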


   \begin{prop}
      Suppose that beyond the assumptions of Proposition~\ref{prop:conv_weak}, we further have to assume that $\sum_{k = 0}^\infty \| o( \lambda_k ) \| < \infty$ and that $f$ is strictly convex. Then
      \begin{equation*}
         f( \viter xk ) \to f^*,
      \end{equation*}
      where $f^*$ is the optimal value of $f$ over $\mathbb R_+^n$.
   \end{prop}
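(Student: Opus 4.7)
The plan is to combine Proposition~\ref{prop:conv_weak} with a Bregman-type Lyapunov argument adapted to the multiplicative scaling $D(\vect x)$. First, strict convexity of $f$ and boundedness of $\{\viter xk\}\subset\mathbb R_+^n$ force $f$ to be bounded below on the iterates, so the alternative $f(\viter xk)\to-\infty$ in Proposition~\ref{prop:conv_weak} is excluded, and there is consequently a subsequence along which $D(\viter x{l_k})\nabla f(\viter x{l_k})\to\vect 0$. I will not use this subsequence to identify its own limit point directly, but only to guarantee later that the Lyapunov values attain $0$ on some subsequence of the whole sequence of iterates.

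Next I would introduce the Bregman divergence
$$
\phi(\vect x):=\sum_{j=1}^n p_j\bigl[x_j^*\ln(x_j^*/x_j)-x_j^*+x_j\bigr],
$$
where $\vect x^*$ is the unique (by strict convexity) minimizer of $f$ on $\mathbb R_+^n$. This $\phi$ is nonnegative on the positive orthant, vanishes only at $\vect x^*$, and enjoys the key identity $D(\vect x)\nabla\phi(\vect x)=\vect x-\vect x^*$. A first-order Taylor expansion of $\phi$ along one iteration of~\eqref{eq:algo_2step}--\eqref{eq:ass_step1}, together with convexity of $f$ in the form $\langle\viter xk-\vect x^*,\nabla f(\viter xk)\rangle\geq f(\viter xk)-f^*\geq 0$, yields an inequality
$$
\phi(\viter x{k+1})-\phi(\viter xk)\leq -\lambda_k\bigl(f(\viter xk)-f^*\bigr)+E_k,
$$
with $E_k$ controlled by the second-order Taylor remainder of $\phi$, by $\|o(\lambda_k)\|$, and by $\|\viter sk\|$. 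Under the new hypothesis $\sum\|o(\lambda_k)\|<\infty$ together with boundedness of the iterates and of $\nabla\phi$ along them (with $\viter sk$ absorbed into the $o(\lambda_k)$ pool, which is consistent with $\|\viter sk\|/\lambda_k\to 0$), one checks that $\sum_k E_k<\infty$. A Robbins--Siegmund telescoping argument then shows both that $\phi(\viter xk)$ converges and that $\sum_k\lambda_k(f(\viter xk)-f^*)<\infty$; combined with $\sum_k\lambda_k=\infty$ this forces $\liminf_k f(\viter xk)=f^*$. Extracting a sub-subsequence along which this liminf is attained and which (by boundedness) is also convergent, continuity of $f$ and strict convexity force the limit to be $\vect x^*$. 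Hence $\phi$ vanishes at a limit point of $\{\viter xk\}$, so $\phi(\viter xk)\to 0$, and strict convexity of $\phi$ at $\vect x^*$ yields $\viter xk\to\vect x^*$ and finally $f(\viter xk)\to f^*$.

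The main obstacle is the bookkeeping for $E_k$. The function $\phi$ and its derivatives blow up on $\{x_j=0\}$ for indices $j$ with $x_j^*>0$, so one must either ensure the iterates stay uniformly away from such faces or restrict $\phi$ to the strictly positive part of the trajectory; the multiplicative scaling $D(\vect x)$ helps in the former, since a descent step $-\lambda_k D(\viter xk)\nabla f(\viter xk)$ cannot drive a strictly positive coordinate exactly to zero. The superiorization increment $\viter sk$ is only $o(\lambda_k)$ and not a priori summable, so absorbing its contribution into $E_k$ typically requires exploiting cancellations of $\nabla\phi(\viter xk)^T\viter sk$ against the leading term, or refining the summability hypothesis in the statement. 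This is the delicate point, worked out in \cite[Propositions~5~and~7]{hed05}.
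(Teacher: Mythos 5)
First, a point of comparison: the paper does not actually prove this proposition. It explicitly defers to \cite[Proposition~5]{hed05}, and only sketches the strategy in the paragraph that follows: use summability of the perturbation to get convergence of the sequence $\{f(\viter xk)\}$, then show that a non-optimal limit point would force the existence of a second, distinct limit point, with both being optimizers of the problem restricted to a face $\{x_j = 0\}$ of the orthant, contradicting strict convexity. Your Bregman/Robbins--Siegmund route is therefore a genuinely different (and in principle viable, classical for \textsc{em}-type multiplicative iterations) approach, but as written it has two gaps that you name yourself without closing, and naming them does not close them.

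The decisive gap is (i): your error term $E_k$ contains $\nabla\phi(\viter xk)^T(o(\lambda_k)+\viter sk)$ and a second-order remainder governed by $\nabla^2\phi_{jj}(\vect\xi) = p_j x_j^*/\xi_j^2$, and both blow up as $\iter{x_j}k \to 0$ on coordinates where $x_j^* > 0$. Boundedness and positivity of $\{\viter xk\}$ do not keep the iterates uniformly away from those faces, and the observation that a multiplicative step cannot reach zero \emph{exactly} gives no uniform lower bound --- coordinates can still decay to zero along the iteration, making $\sum_k E_k = \infty$. This is precisely the obstruction the paper itself isolates (``there is no way to guarantee that the error term is small compared to the components of the iterates in case $\iter{x_j}k \to 0$''), and it is the reason the cited proof abandons the Lyapunov route in favour of the two-limit-point argument on the faces of $\mathbb R_+^n$. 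Gap (ii): the superiorization term satisfies only $\|\viter sk\|/\lambda_k \to 0$, which together with $\sum_k\lambda_k=\infty$ does \emph{not} imply $\sum_k\|\viter sk\|<\infty$; so it cannot simply be ``absorbed into the $o(\lambda_k)$ pool'' covered by the hypothesis $\sum_k\|o(\lambda_k)\|<\infty$, and the Robbins--Siegmund telescoping fails unless you either strengthen the hypothesis to cover $\viter sk$ (which is arguably the paper's implicit reading, given its remark that summability ``could make the influence of the superiorization small'') or exhibit the cancellation you allude to. Until (i) and (ii) are resolved, the claims that $\phi(\viter xk)$ converges and that $\sum_k\lambda_k(f(\viter xk)-f^*)<\infty$ are unproven, and the rest of the argument has nothing to stand on.
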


   While these results may be useful in many cases, our interest is in exploring the circumstances where there may be several optima and we wish to select one among these, according to some criterion. Therefore, the strict convexity hypothesis is too restrictive (albeit superiorized techniques can be useful in the unique solution case too). Furthermore, summability of the perturbations could make the influence of the superiorization small in the overall optimization process.

   A difficulty in proving convergence results for algorithms of the form~\eqref{eq:algo_2step} is that the step direction can vanish at a non-optimal point. This fact leads to the possibility that a sequence that could converge to non-optimality. This possibility can not be ruled out because there is no way to guarantee that the error term is small compared to the components of the iterates in case $\iter{x_j}k \to 0$. The technical way found in the literature is to use summability of the perturbation term to obtain convergence of the objective value sequence. Then, assuming strict convexity one can prove convergence by finding out that the existence of a non-optimal limit point would imply the existence of another different limit point and that both of these are optimizers of the original problem with added constraints of the form $x_j = 0$. The next subsection analyses the alternative \textsc{ssaem}, which avoids these difficulties.

   \subsection{SSAEM-SUP}

   We now analyze iterative techniques of the form:
   \begin{equation}\label{eq:algo_2step_mod}
      \begin{split}
         \viter x{k + 1 / 2} &= \viter xk - \lambda_k \tilde D( \viter xk, \viter gk )\viter gk\\
         \viter x{k + 1} &= \viter x{k + 1 / 2}  + \viter sk.
      \end{split}
   \end{equation}
   Where
   \begin{equation}\label{eq:algo_2step_mod_approx}
      \viter gk = \nabla f( \viter xk ) + \vect \epsilon_k
   \end{equation}
   with
   \begin{equation}\label{eq:algo_2step_mod_vanerr}
      \vect \epsilon_k \to \vect 0
   \end{equation}
   and where $\tilde D( \vect x, \vect g )$ is the diagonal matrix given componentwise as
   \begin{equation*}
   \begin{split}
      \tilde D( \vect x, \vect g )_{jj} & {}:= \begin{cases}
                     \frac{\tau}{p_j} & \text{if}\quad x_j \leq \tau\quad\text{and}\quad g_j \leq 0\\
                     \frac{x_j}{p_j} & \text{otherwise}
                  \end{cases}\\
               & {}= \begin{cases}
                     \frac{x_j}{p_j} & \text{if}\quad g_j > 0\\
                     \frac{\max\{x_j, \tau \}}{p_j} & \text{otherwise.}
                 \end{cases}
   \end{split}
   \end{equation*}

   We have already discussed how to implement this kind of algorithm in the previous section, and it consists of a string-averaged scaled incremental gradient iteration where the scaling matrix does not contain too small entries, followed by a correction in those components which are small and have positive corresponding component $\iter{g_j}k$. The scaling matrix $\tilde D( \viter xk, \viter gk )$ can be seen as an approximation to $\hat D( \viter xk )$ where
   \begin{equation*}
      \hat D( \vect x )_{jj} := \begin{cases}
                     \frac{x_j}{p_j} & \text{if}\quad \displaystyle\frac{\partial f}{\partial x_j}( \vect x ) > 0\\
                     \frac{\max\{x_j, \tau\}}{p_j} & \text{otherwise.}
                 \end{cases}
   \end{equation*}
   Notice that, if $\vect x^* \in \mathbb R_+^n$, $\hat D( \vect x^* )^{1 / 2}\nabla f( \vect x^* ) = \vect 0$ is a necessary and sufficient optimality condition for $\vect x^*$ to be an optimizer of problem~\eqref{eq:prob_basic} given that $f$ is convex and $\tau > 0$. As a basic tool for further analysis, we give the practical approximation $\tilde D$ to $\hat D$ a more precise and useful mathematical characterization. We will simplify the notation by using $D^k := \tilde D( \viter xk, \viter gk )$.

   \begin{prop}\label{prop:mainFact}
      Suppose $f$ is convex and continuously differentiable, that $\{ \viter xk \} \subset \mathbb R_+^n$ generated by algorithm~\eqref{eq:algo_2step_mod} is bounded and that \eqref{eq:algo_2step_mod_approx} and \eqref{eq:algo_2step_mod_vanerr} hold. Then, for every $\epsilon > 0$, there are $\kappa \in \mathbb N$ and $\delta_\epsilon > 0$ such that for every $k \geq \kappa$ we have
      \begin{equation*}
         f( \viter xk ) \geq f^* + \epsilon \Rightarrow \| (D^k)^{1 / 2}\viter gk \|^2 \geq \delta_{\epsilon}.
      \end{equation*}
   \end{prop}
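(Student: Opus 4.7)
The plan is to argue by contradiction. Negating the conclusion yields some $\epsilon > 0$ such that for every $\kappa \in \mathbb N$ and every $\delta > 0$ there exists $k \geq \kappa$ with $f( \viter xk ) \geq f^* + \epsilon$ and $\| (D^k)^{1 / 2}\viter gk \|^2 < \delta$. From this I would extract a subsequence $\{ \viter x{l_k} \}$ satisfying $l_k \to \infty$, $f( \viter x{l_k} ) \geq f^* + \epsilon$, and $\| (D^{l_k})^{1 / 2}\viter g{l_k} \|^2 \to 0$. By boundedness of $\{ \viter xk \}$, I would pass to a further subsequence (relabeled the same way) with $\viter x{l_k} \to \bar{\vect x} \in \mathbb R_+^n$. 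Continuity of $f$ gives $f( \bar{\vect x} ) \geq f^* + \epsilon > f^*$, so $\bar{\vect x}$ is not an optimizer of problem~\eqref{eq:prob_basic}. Continuity of $\nabla f$ together with~\eqref{eq:algo_2step_mod_vanerr} gives $\viter g{l_k} \to \nabla f( \bar{\vect x} )$. The strategy is to show that $(D^{l_k})^{1 / 2}\viter g{l_k} \to \vect 0$ forces the KKT conditions for problem~\eqref{eq:prob_basic} to hold at $\bar{\vect x}$, which, by convexity of $f$, would contradict $f( \bar{\vect x} ) > f^*$.

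The argument proceeds coordinatewise. Fix a coordinate index $j \in \{ 1, 2, \dots, n \}$ and split according to the sign of $\partial f / \partial x_j ( \bar{\vect x} )$. If $\partial f / \partial x_j ( \bar{\vect x} ) > 0$, then $\iter g{l_k}_j > 0$ for every sufficiently large $k$, so the first branch of the definition of $D^{l_k}$ gives $(D^{l_k})_{jj} = \iter x{l_k}_j / p_j$; passing to the limit in $(D^{l_k})_{jj}^{1 / 2}\iter g{l_k}_j$ produces $( \bar x_j / p_j )^{1 / 2}\partial f / \partial x_j ( \bar{\vect x} )$, which must equal zero, forcing $\bar x_j = 0$ and yielding the complementarity condition at this coordinate. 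If $\partial f / \partial x_j ( \bar{\vect x} ) < 0$, then $\iter g{l_k}_j < 0$ eventually, so $(D^{l_k})_{jj} = \max\{ \iter x{l_k}_j, \tau \} / p_j \geq \tau / p_j > 0$; then $| (D^{l_k})_{jj}^{1 / 2}\iter g{l_k}_j |$ is bounded below by $( \tau / p_j )^{1 / 2} | \iter g{l_k}_j |$, which tends to $( \tau / p_j )^{1 / 2} | \partial f / \partial x_j ( \bar{\vect x} ) | > 0$, contradicting the vanishing of the scaled gradient. Hence this case cannot occur. In the remaining case $\partial f / \partial x_j ( \bar{\vect x} ) = 0$, the limit of $(D^{l_k})_{jj}^{1 / 2}\iter g{l_k}_j$ is zero automatically (regardless of which branch $D^{l_k}$ takes, since $(D^{l_k})_{jj}$ remains bounded), and KKT is trivially satisfied at coordinate $j$.

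Combining the three cases, $\bar{\vect x}$ satisfies $\partial f / \partial x_j ( \bar{\vect x} ) \geq 0$ and $\bar x_j \cdot \partial f / \partial x_j ( \bar{\vect x} ) = 0$ for every $j$, so by convexity $\bar{\vect x}$ is a minimizer of $f$ over $\mathbb R_+^n$; but then $f( \bar{\vect x} ) = f^*$, contradicting $f( \bar{\vect x} ) \geq f^* + \epsilon$. The main obstacle I anticipate is the case-split nature of $\tilde D$: the definition depends on $\mathrm{sign}(\iter g{l_k}_j)$, which can oscillate when $\partial f / \partial x_j ( \bar{\vect x} )$ happens to vanish; this is handled cleanly by the fact that in that case the scalar factor $\iter g{l_k}_j$ already kills the product, whereas in both strict-sign cases the sign of $\iter g{l_k}_j$ is eventually determined, allowing one to pin down which branch of $\tilde D$ applies. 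The role of the threshold $\tau$ is precisely to prevent the scaled gradient from being artificially annihilated along a coordinate carrying a persistent negative partial derivative, which is what makes the case $\partial f / \partial x_j ( \bar{\vect x} ) < 0$ impossible and thereby rules out spurious limit points.
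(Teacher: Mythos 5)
Your proposal is correct and follows essentially the same route as the paper: a contradiction argument extracting a convergent subsequence, a coordinatewise case split on the sign of $\partial f/\partial x_j(\overline{\vect x})$ in which the threshold $\tau$ rules out a persistent negative partial derivative, and convexity to conclude that the limit point would have to be optimal. The only cosmetic difference is that you phrase the conclusion via the KKT conditions while the paper packages the same statement as $\hat D(\overline{\vect x})\nabla f(\overline{\vect x}) = \vect 0$ being a necessary and sufficient optimality condition.
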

   \begin{proof}
      If the claim were false, there would exist $\epsilon > 0$ and a subsequence $\{ \viter x{l_k} \}$ such that $f( \viter x{l_k} ) \geq f^* + \epsilon$ and $\| (D^{l_k})^{1 / 2}\viter g{l_k} \| \to 0$. Because of the boundedness assumptions, there is no loss of generality in assuming that we have the limits $\viter x{l_k} \to \overline{\vect x}$ and $D^{l_k} \to \overline D$. Notice also that we have $\viter g{l_k} \to \nabla f( \overline{\vect x} )$.

      Furthermore, we claim that $\hat D( \overline{ \vect x} )\nabla f( \overline{\vect x} ) = \vect 0$. If this is not true, it is because at least one of the two cases below hold for a given index $j$:
      \begin{enumerate}
         \item $\overline x_j > 0$ and $\displaystyle\frac{\partial f}{\partial x_j}( \overline{\vect x} ) > 0$;\label{case:negative}
         \item $\displaystyle\frac{\partial f}{\partial x_j}( \overline{\vect x} ) < 0$\label{case:positive}.
      \end{enumerate}
      If Case~\ref{case:negative} were true, for large enough $k$ we would have $\iter{x_j}{l_k} \geq 1/2\overline x_j$ and $\iter{g_j}{l_k} \geq 1/2 \frac{\partial f}{\partial x_j}( \overline{\vect x} )$. Therefore, there would hold $D^{l_k}_{jj}\frac{\partial f}{\partial x_j}( \viter x{l_k} ) \geq 1/4D( \overline{\vect x} )_{jj}\frac{\partial f}{\partial x_j}( \overline{\vect x} ) > 0$, which contradicts $\| (D^{l_k})^{1 / 2}\viter g{l_k} \| \to 0$. If, on the other hand, Case~\ref{case:positive} holds, then, for large enough $k$, we have $\iter{g_j}{l_k} \leq 1/2 \frac{\partial f}{\partial x_j}( \overline{\vect x} )$ and therefore $D^{l_k}_{jj}\frac{\partial f}{\partial x_j}( \viter x{l_k} ) \leq 1/2\tau/p_j\frac{\partial f}{\partial x_j}( \overline{\vect x} ) < 0$, again a contradiction.

      Now, since $\overline{\vect x}$ satisfies $f( \overline{\vect x} ) \geq f^* + \epsilon$ it cannot be an optimal point. However, $\hat D( \overline{\vect x} )\nabla f( \overline{\vect x} ) = \vect 0$ is a necessary and sufficient condition for $\overline{\vect x}$ to be optimal if $f$ is convex. This contradiction is derived from the assumption that the specified $\{ \viter x{l_k} \}$ exists, and therefore the claim is proven.
   \end{proof}

   With this result in hand we can prove convergence of the method. The key idea is to notice that if $f( \viter xk )$ differs from $f^*$ by more than some threshold, then we can prove that the algorithm reduces the value of the objective function at small enough stepsize regimes. This implies in convergence because it results that once we are inside a sublevel set we cannot ``get away from it'' by much. The details are in the theorem below.

   \begin{theo}
      Assume $f$ is convex and continuously differentiable, the sequence of stepsizes satisfy
      \begin{equation*}
         \sum_{k = 0}^\infty\lambda_k = \infty, \quad\lambda_k \to 0^+\quad\text{and}\quad\frac{\| \viter sk \|}{\lambda_k} \to 0,
      \end{equation*}
      and that $\{ \viter xk \}$, as generated by Algorithm~\eqref{eq:algo_2step_mod} satisfying ~\eqref{eq:algo_2step_mod_approx}~and~\eqref{eq:algo_2step_mod_vanerr} is bounded and nonnegative. Then
      \begin{equation*}
         f( \viter xk ) \to f^*.
      \end{equation*}
   \end{theo}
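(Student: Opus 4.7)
The plan is to combine a one-step Taylor-type descent bound for $f$ with Proposition~\ref{prop:mainFact}, and then run an \emph{entry-exit} argument that prevents the iterates from escaping far above $f^*$.

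First, I would establish the asymptotic recursion
\begin{equation*}
   f( \viter x{k + 1} ) = f( \viter xk ) - \lambda_k \| (D^k)^{1/2} \viter gk \|^2 + r_k,\qquad r_k = o( \lambda_k ).
\end{equation*}
Because $\{ \viter xk \}$ is bounded and $f \in C^1$, $\nabla f$ is uniformly continuous on a compact set containing the iterates, so a first-order expansion gives $f( \viter x{k + 1} ) - f( \viter xk ) = \nabla f( \viter xk )^T( \viter x{k + 1} - \viter xk ) + o( \| \viter x{k + 1} - \viter xk \| )$. Substituting $\viter x{k + 1} - \viter xk = -\lambda_k D^k \viter gk + \viter sk$ and $\nabla f( \viter xk ) = \viter gk - \vect\epsilon_k$, then using $\| \viter sk \| / \lambda_k \to 0$, $\vect\epsilon_k \to \vect 0$, the uniform boundedness of $\viter gk$ and $D^k$, and $\| \viter x{k + 1} - \viter xk \| = O( \lambda_k )$, every cross term collapses into $o( \lambda_k )$ and the claimed identity follows.

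Next, fix $\epsilon > 0$. Apply Proposition~\ref{prop:mainFact} at level $\epsilon / 2$ to obtain $\kappa \in \mathbb N$ and $\delta > 0$ such that $f( \viter xk ) \geq f^* + \epsilon / 2$ forces $\| (D^k)^{1/2} \viter gk \|^2 \geq \delta$ whenever $k \geq \kappa$. Choose $K \geq \kappa$ large enough that both $| r_k | \leq \lambda_k \delta / 2$ and $| r_k | < \epsilon / 2$ hold for $k \geq K$; this is possible because $r_k = o( \lambda_k )$ and $\lambda_k \to 0$. The recursion then yields the conditional descent
\begin{equation*}
   f( \viter xk ) \geq f^* + \epsilon / 2 \ \text{and}\ k \geq K \ \Longrightarrow\ f( \viter x{k + 1} ) \leq f( \viter xk ) - \lambda_k \delta / 2.
\end{equation*}
Because $\sum_{k} \lambda_k = \infty$ and $f \geq f^*$, this descent cannot persist indefinitely, so some $k_0 \geq K$ must satisfy $f( \viter x{k_0} ) < f^* + \epsilon / 2$.

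The step I expect to be the main technical point is showing that, from $k_0$ onward, $f( \viter xk )$ never reaches $f^* + \epsilon$. I would argue by contradiction: let $m > k_0$ be the first such index, and let $l$ be the largest index in $\{ k_0, \dots, m - 1 \}$ with $f( \viter xl ) < f^* + \epsilon / 2$. The single ``crossing'' step at $l$ can raise the objective by at most $| r_l |$, since $\| (D^l)^{1/2} \viter gl \|^2 \geq 0$, while at every $k \in \{ l + 1, \dots, m - 1 \}$ the conditional descent above makes $f( \viter x\cdot )$ nonincreasing. Therefore
\begin{equation*}
   f( \viter xm ) \leq f( \viter x{l + 1} ) \leq f( \viter xl ) + | r_l | < f^* + \epsilon / 2 + \epsilon / 2 = f^* + \epsilon,
\end{equation*}
contradicting the choice of $m$. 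Hence $f( \viter xk ) < f^* + \epsilon$ for all $k \geq k_0$; since $\epsilon > 0$ was arbitrary and $f( \viter xk ) \geq f^*$, we conclude $f( \viter xk ) \to f^*$.
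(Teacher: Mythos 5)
Your proposal is correct and follows essentially the same route as the paper: the same one-step estimate $f(\viter x{k+1}) = f(\viter xk) - \lambda_k\|(D^k)^{1/2}\viter gk\|^2 + o(\lambda_k)$, the same appeal to Proposition~\ref{prop:mainFact} for a uniform descent above a sublevel set, and the same ``once inside, cannot escape far'' conclusion. The only difference is cosmetic but in your favor: you bound the single crossing step by $|r_l|<\epsilon/2$ using nonnegativity of the quadratic term, which makes the no-escape argument tighter than the paper's bound $f(\viter x{k+1})\leq f(\viter xk)+\lambda_k M$ and its somewhat informal ``ever reducing sublevel set'' phrasing.
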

   \begin{proof}
   Let us first provide an estimate of the decrease in function value based on the optimality measure $(D^k)^{1 / 2}\viter gk$:
   \begin{equation}\label{eq:reduction}
      \begin{split}
         f( \viter x{k + 1} ) & {}= f\left( \viter xk - \lambda_k\left [ D^k\viter gk + \frac{ \viter sk }{\lambda_k} \right] \right) \\
         &{}= f( \viter xk ) - \lambda_k\nabla f( \viter xk )^T\left [ D^k\viter gk + \frac{ \viter sk }{\lambda_k} \right] + o( \lambda_k ) \\
         &{}= f( \viter xk ) - \lambda_k{\viter gk}^TD^k\viter gk + o( \lambda_k ) \\
         &{}= f( \viter xk ) - \lambda_k\| ( D^k )^{1 / 2}\viter gk \|^2 + o( \lambda_k ).
      \end{split}
   \end{equation}
   In the above sequence, the first equality is from~\eqref{eq:algo_2step_mod}, the second from differentiability of $f$ and boundedness of $\{ D^k\viter gk + \viter sk / \lambda_k \}$, which is a consequence of the boundedness of $\{ \viter xk \}$ and of $\viter sk / \lambda_k \to 0$. The third equality is based on~\eqref{eq:algo_2step_mod_approx}~and~\eqref{eq:algo_2step_mod_vanerr}, while the fourth equality is merely formal.

   Now let us fix any $\epsilon > 0$ and assume that $\kappa$ is large for Proposition~\ref{prop:mainFact} to hold for this $\epsilon$ and also that $k \geq \kappa$ implies that $o( \lambda_k  ) / \lambda_k \leq \delta_\epsilon / 2$, where $\delta_\epsilon$ is from Proposition~\ref{prop:mainFact}.

   We then split in two cases:
   \begin{enumerate}
      \item $f( \viter xk ) > f^* + \epsilon$;\label{case:large}
      \item $f( \viter xk ) \leq f^* + \epsilon$.\label{case:small}
   \end{enumerate}

   In Case~\ref{case:large}, Proposition~\ref{prop:mainFact}, equation~\eqref{eq:reduction} and $o( \lambda_k  ) / \lambda_k \leq \delta_\epsilon / 2$ lead to
   \begin{equation}\label{eq:redLarge}
      \begin{split}
         f( \viter x{k + 1} ) & {}= f( \viter xk ) - \lambda_k\| ( D^k )^{1 / 2}\viter gk \|^2 + o( \lambda_k )\\
         & {}\leq f( \viter xk ) - \lambda_k\delta_\epsilon + \lambda_k\frac{\delta_\epsilon}2\\
         & {}\leq f( \viter xk ) - \lambda_k\frac{\delta_\epsilon}2.
      \end{split}
   \end{equation}
   Therefore, because $\sum_{k = 0}^\infty \lambda_k = \infty$ we know that the either the second case occurs infinitely many times or $f( \viter xk ) \to -\infty$. If this second possibility is true, the claim is proven, if not, we proceed with the argument.

   Now if Case~\ref{case:small} holds, then we have
   \begin{equation*}
      f( \viter x{k + 1} ) = f\bigl( \viter xk + O( \lambda_k ) \bigr) \leq  f\bigl( \viter xk ) + \lambda_k M
   \end{equation*}
   for some large enough $M$. Therefore, a consequence of the fact that we reach this case infinitely many times coupled with \eqref{eq:redLarge} for Case~\ref{case:negative} is that after reaching the sublevel set $\lev_{\epsilon}( f ) := \{\vect x : f( \vect x ) \leq f^* +\epsilon \}$, say in iterate $k_0$, the algorithm never escapes from the larger, but ever reducing, sublevel set $\lev_{\epsilon + \lambda_{k_1}}( f )$ for some $k_1 \geq k_0$. Thus
   \begin{equation*}
      \limsup_{k \to \infty} f( \viter xk ) \leq f^* + \epsilon.
   \end{equation*}
   But since $\epsilon > 0$ was arbitrary and $f( \viter xk ) \geq f^*$, we then have the claim proven.
   \end{proof}

%
%
%

   \section{Numerical Experimentation}

   The numerical experiments are divided in two independent sets. In the first of these, the \textsc{saem}~\cite{hcc14} is compared with the \textsc{em}~\cite{vsk85} considering two kind of superiorization sequences for non-negatively constrained Total Variation as a secondary criteria. The first superiorization sequence is produced according to Algorithm \ref{super-seq}, following the ideas presented in \cite{Garduno2014}, the second superiorization sequence is produced by the \textsc{fgp} algorithm from \cite{bet09b}. 

   Moreover, this first set of experiments considers 15 repetitions of the error simulating procedure where the \textsc{mse} and \textsc{ssim}~\cite{wbs04} figures of merit of the reconstructed image of each algorithmic variation are computed alongside with other numerical and performance indicators. These experiments were done on a quad core Intel i5-4570S \textsc{cpu} @2.9GHz, with 32GB DDR3 memory.

   The second test is used in order to assess the viability of the \textsc{ssaem} with or without superiorization for large tomographic reconstruction from actual data. It uses a specialized \textsc{gpu} implementation which is efficient only for $s = 1$. The objective of this set of experiments is to both give a proof of concept showing that the algorithm is efficient in practical applications and to assess the effects of superiorization in its performance for use with the maximum likelihood model for transmission tomography. In this case a GeForce GTX 745 was used.

   \subsection{\textsc{saem}}

   This technique was used, as already mentioned, in order to solve problems of the form
   \begin{equation*}
      \begin{split}
         \min &\quad \sum_{i = 1}^m\left\{ (R\vect x)_i - b_i \log( R\vect x )_i \right\}\\
         \st &\quad \vect x \in \mathbb R_+^n,
      \end{split}
   \end{equation*}
   which can be interpreted both as the maximum likelihood model for emission tomography and as the minimal Kullback-Leibler distance model for a general non-consistent non-negative system of equations. In this case, each $f_i$ was only one term of the above sum, i.e.,
   \begin{equation*}
      f_i( \vect x ) = (R\vect x)_i - b_i \log( R\vect x )_i
   \end{equation*}
   and, therefore,
   \begin{equation*}
      \nabla f_i( \vect x ) = R_i^T\left( 1 - \frac{b_i}{( R\vect x )_i} \right),
   \end{equation*}
   where $R_i$ is the $i$-th line of $R$.

   \paragraph{Tomographic setup} For this first set of tests we utilized 32 angles with 182 line integrals each. The reconstructed images and the original numeric phantoms have dimensions of $128 \times 128$ pixels. Poisson noise was added to this sparse-angle simulated acquisition resulting in a signal to noise ratio of around $18$dB in the data for each of the 15 repetitions.

   \subsubsection{Algorithmic parameters}

   \paragraph{Starting image} All of the algorithms, including \textsc{saem} with or without superiorization, were started from a uniform image $\iter x0_j = \phi$ such that $\sum_{i = 1}^m( R\viter x0 )_i = \sum_{i = 1}^m b_i$. It can be noticed that the appropriate value for this to hold is
   \begin{equation*}
      \phi = \frac{\sum_{i = 1}^m b_i}{\sum_{i = 1}^m( R\vect 1 )_i},
   \end{equation*}
   where $\vect 1$ is a vector in $\mathbb R^n$ with all entries equal to $1$.

   \paragraph{Strings formation and weights} We denote as \textsc{saem}-$s$ the \textsc{saem} algorithm with $s$ strings, and as \textsc{saem}-$s$-\textsc{tvs} and \textsc{saem}-$s$-\textsc{tvs}-\textsc{fgp} its standard superiorized version and its \textsc{fgp} superiorized version, respectively. All of \textsc{saem}-$s$ versions, with or without superiorization, used weights $w_i = 1 / s$, thereby maintaining the unweighted likelihood model. Strings were selected by randomly shuffling the data and partitioning into $s$ strings of equal length (depending on divisibility of the data size by the number of strings, some strings actually contained one element more than others). All algorithms were ran for several iterations, however the final result was assumed when $f( \viter xk ) \leq 400$. This value was experimentally chosen corresponding to a point where all algorithms are close to their minimum MSE (see Figures \ref{EMvsSAEM} and \ref{EMvsSAEM-TVS}).

   \paragraph{Scaling matrix} The coefficients $p_i$ of $D( \vect x )$ must be selected, we have followed~\cite{hcc14}:
   \begin{equation*}
      p_j = \sum_{i = 1}^n a_{ij} \quad j \in \{ 1, 2, \dots, n \}.
   \end{equation*}
   This scaling was selected because then \textsc{saem}-$m$ with fixed stepsize $\lambda = m$ corresponds to the \textsc{em} algorithm.

   \paragraph{Stepsize sequence} The stepsize sequence $\lambda_k$ was chosen following~\cite{hcc14}: for \textsc{saem}-$s$ (i.e., $s$ strings) and its superiorized versions we have used
   \begin{equation*}
      \lambda_k = \frac{\lambda_{0, s}}{k^{0.51} / s + 1},
   \end{equation*}
   where $\lambda_{0, s}$ was the largest value such that \textsc{saem}-$s$ would lead to a positive $\viter x1$.

   \paragraph{Superiorization sequences} For the standard superiorization sequence, the step $\beta_{n,k}$ is unknown a priory and depends on the choice of the nonascending direction $\viter v{n,k}$. So the size of the vector  $\viter s{k}$ depends on $\beta_0$ and $N$. The larger these two variables the larger is the visual effect in the superiorization sequence. We have used $\beta_0 = 1$ and $N = 10$ or $N = 20$ for \textsc{em} and \textsc{saem}, respectively, and we have fixed $\alpha = 0.95$.

   For the superiorization sequence produced by the proximal operator via FGP, the size of $\viter s{k}$ depends on the $\gamma_k$ used in \eqref{eq:TV-PROX}.  The $\gamma_k$ chosen for this experiments is:
   \begin{equation} \label{eq:gamma-fgp}
      \gamma_k = \dfrac{\gamma_0}{(k+1)^{1+eps}}
   \end{equation}
   where $eps$ is the machine epsilon\footnote{The machine epsilon is an upper bound on the relative error due to rounding in floating point arithmetic. For double precision it is around 2.22e-16.}. The values used for $\gamma_0$ used were manually adjusted to $0.15$ for \textsc{em}-\textsc{tvs}-\textsc{fgp} and $0.3$ for \textsc{saem}-\textsc{tvs}-\textsc{fgp}, which were values that produced a competitive performance. The sequence with step from \eqref{eq:gamma-fgp} is also summable, which is a requisite from the convergence analysis. We expect the sequence produced by the proximal operator to be more efficient than subgradient, in the sense that the superiorized sequence is closer to the Pareto optimal curve, but without much more computational cost per iteration.

   \subsubsection{Numerical results}

   The tests compared \textsc{em} and \textsc{saem}, considering the use or not of a superiorization sequence. In Figure~\ref{EMvsSAEM} we show the comparison with the standard superiorization sequence. The plot in Figure~\ref{EMvsSAEM-ERR} shows one realization of the experiment where the estimation error from the current solution to the original phantom image $\vect x^*$. The green boxes in the error curves correspond to the first image in the iteration process, of each method, to satisfy the stopping criteria. The reconstructed images from one of the 15 repetitions are shown in Figure~\ref{EMvsSAEM-images} and Table~\ref{tab:imag_analysis_saem} brings some statistical information about experiment.

   Figure~\ref{EMvsSAEM-TVKL} shows a curve of Kullback Leibler distance versus Total Variation measurement at each iteration. This figure, which resembles an L-curve~\cite{han92}, helps to see how much the Total Variation measurement increases while the iterations runs reducing the Kullback Leibler distance, basically moving from the lower right to the upper left of Figure \ref{EMvsSAEM-TVKL}. The points corresponding to the images satisfying the stopping criteria are also marked in green boxes on the plots.

   \begin{figure}%
      \centering%
      \subfigure[Estimation Error $\| \viter xk - \vect x^* \|$]{\label{EMvsSAEM-ERR}%
         \includegraphics[width=0.5\textwidth,height=0.3\textwidth]{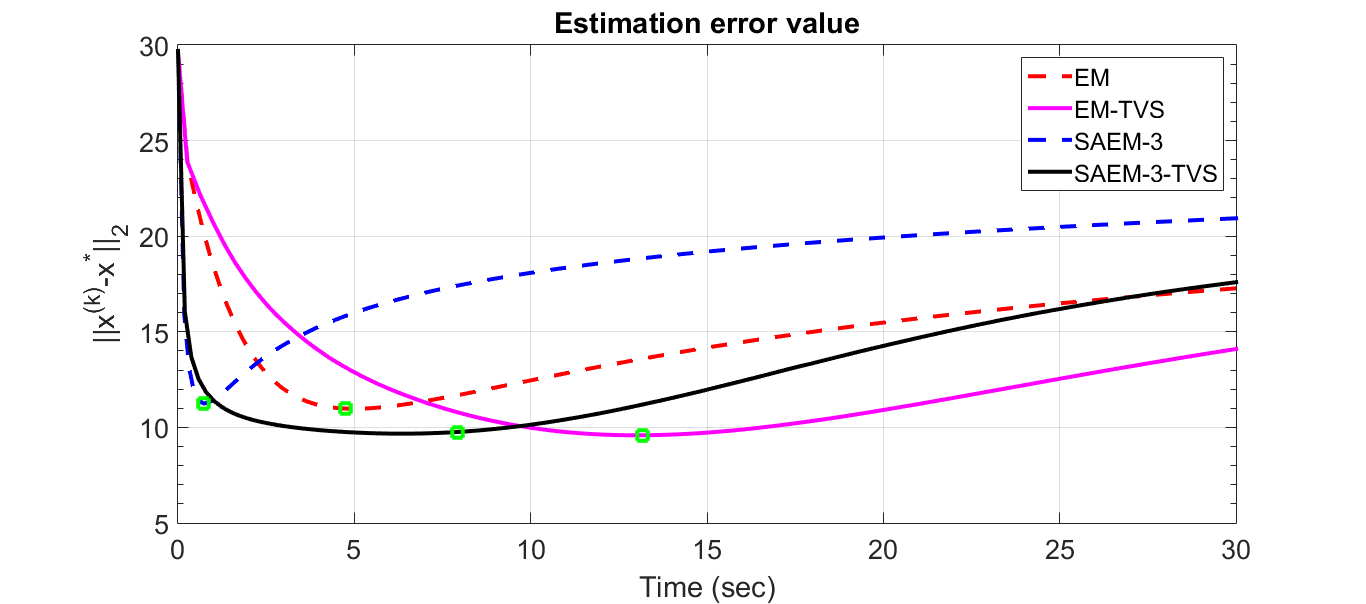}%
      }%
      \subfigure[Total Variation vs Kullback-Leibler Curve]{\label{EMvsSAEM-TVKL}%
         \includegraphics[width=0.5\textwidth,height=0.3\textwidth]{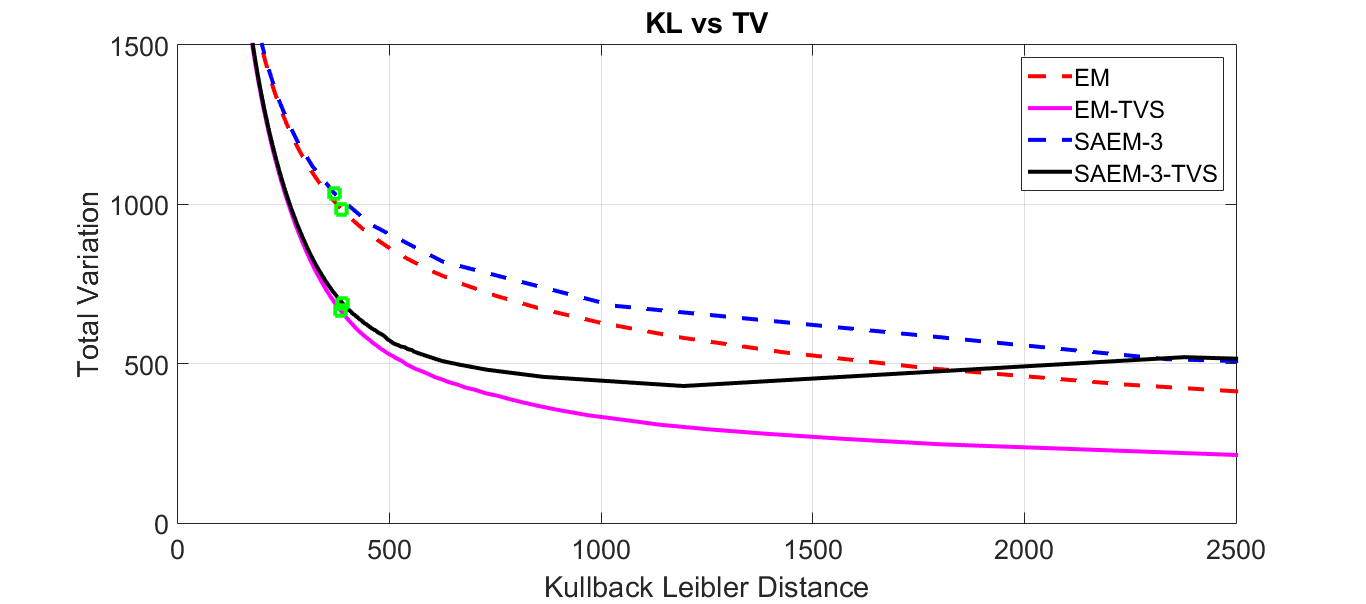}%
      }%
      \caption{Comparison between EM and SAEM with and without superiorization.}\label{EMvsSAEM}%
   \end{figure}%
   \begin{figure}%
      \centering%
      \subfigure[Estimation Error $|| \viter xk - \vect x^* \|$]{\label{EMvsSAEM-TVS-ERR}%
         \includegraphics[width=0.5\textwidth,height=0.3\textwidth]{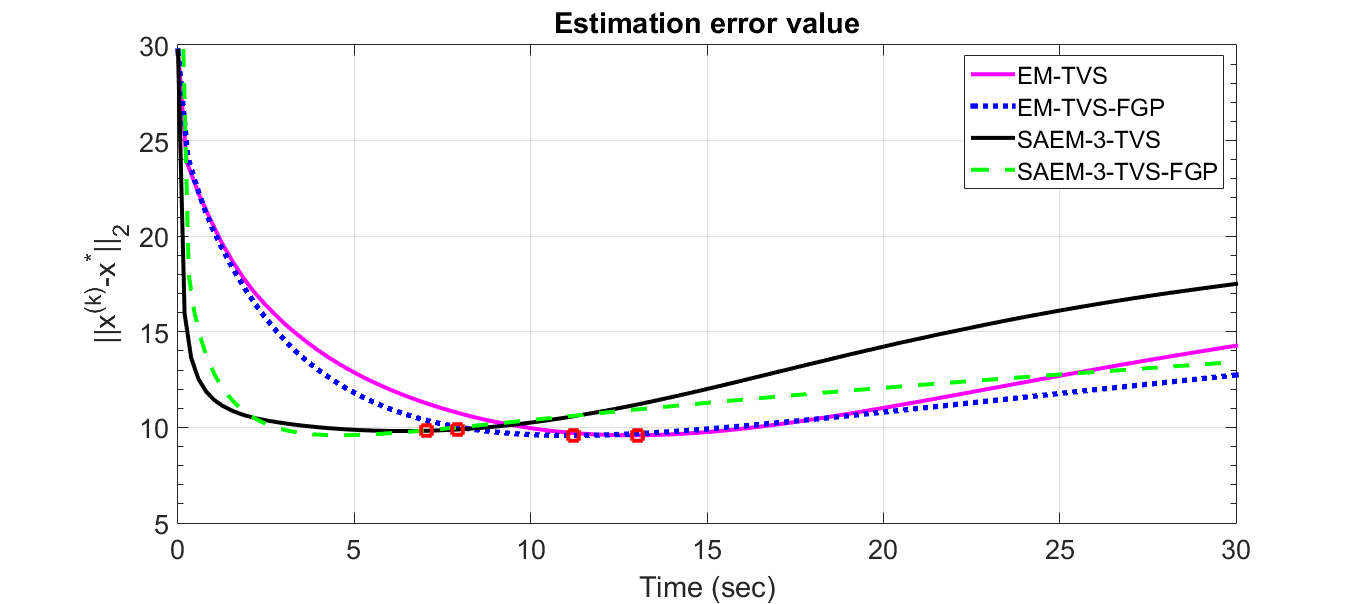}%
      }%
      \subfigure[Total Variation vs Kullback-Leibler Curve ]{\label{EMvsSAEM-TVS-TVKL}%
         \includegraphics[width=0.5\textwidth,height=0.3\textwidth]{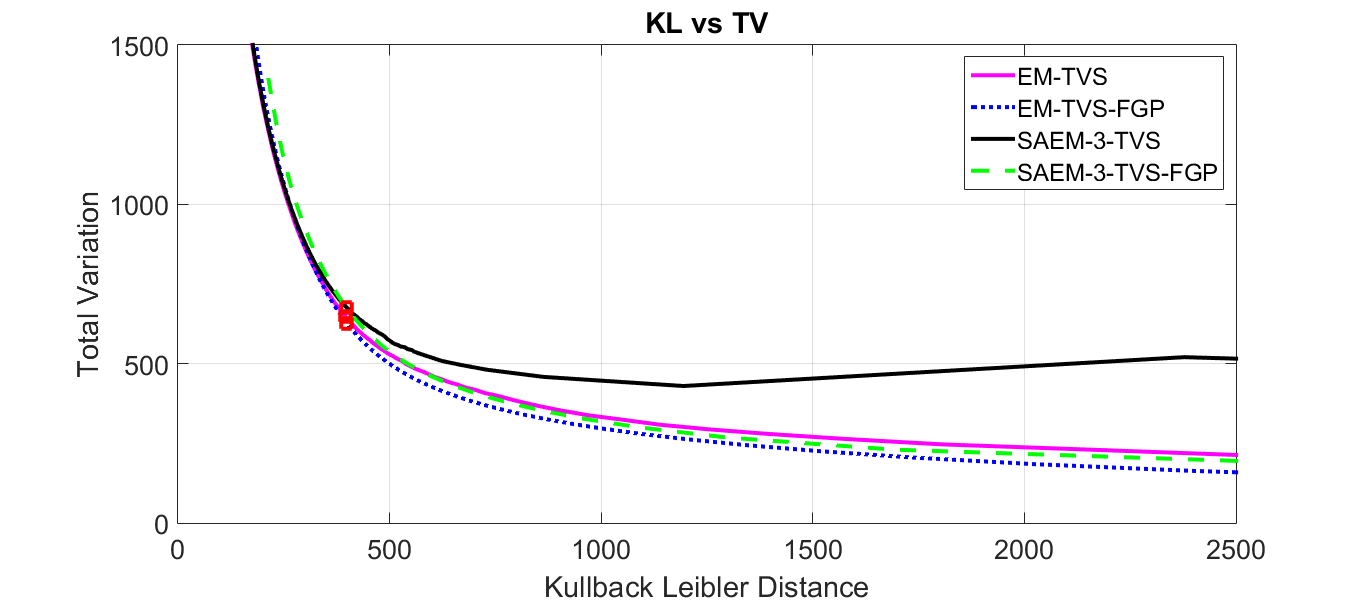}%
      }%
      \caption{Comparison between EM and SAEM with standard superiorization and FGP superiorization.}\label{EMvsSAEM-TVS}%
   \end{figure}

  In Figure~\ref{EMvsSAEM-TVS} we compare the effect of the two superiorization sequences, the standard one against the proximal, produced by FGP. The plot in Figure~\ref{EMvsSAEM-TVS-ERR} shows the estimation error from the current solution to the original phantom image $\vect x^*$. Now the points corresponding to images satisfying the stopping criteria are marked as red boxes. The plot in Figure~\ref{EMvsSAEM-TVS-TVKL}  shows a curve of Kullback Leibler distance and Total Variation measurement. This plot also shows, as iteration runs, an evolution of the results moving the lower right to the upper left of Figure~\ref{EMvsSAEM-TVS-TVKL}. Note that the final result according to the stopping criteria never reaches the upper left part, it ends in the red boxes which is the first iteration where $f( \viter xk ) \leq 400 $ .

   \begin{figure}[t]%
      \newcommand{\wdt}{0.4\textwidth}%
      \subfigure[EM]{\label{EM}%
         \includegraphics[width=\wdt]{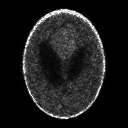}%
      }\hfill%
      \subfigure[SAEM-3]{\label{SAEM}%
        \includegraphics[width=\wdt]{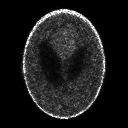}%
      }\\%
      \subfigure[EM-TVS]{\label{EM-TVS}%
         \includegraphics[width=\wdt]{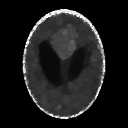}%
      }\hfill%
      \subfigure[SAEM-3-TVS]{\label{SAEM-TVS}%
         \includegraphics[width=\wdt]{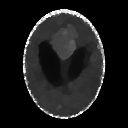}%
      }\\
      \subfigure[EM-TVS-FGP]{\label{EM-TVS-FGP}%
         \includegraphics[width=\wdt]{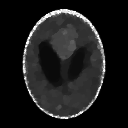}%
      }\hfill%
      \subfigure[SAEM-3-TVS-FGP]{\label{SAEM-TVS-FGP}%
         \includegraphics[width=\wdt]{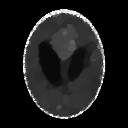}%
      }%
      \caption{Visual Comparison between EM and SAEM images.}\label{EMvsSAEM-images}%
   \end{figure}%

   \begin{table}%
      \centering \resizebox{\textwidth}{!}{\small%
      \begin{tabular}{rcccccc}%
                          & \textsc{em} & \textsc{saem}-3 & \textsc{em}-\textsc{tvs} & \textsc{saem}-3-\textsc{tvs}%
                          & \textsc{em}-\textsc{tvs}-\textsc{fgp} & \textsc{saem}-3-\textsc{tvs}-\textsc{fgp} \\%
         \hline%
         \textsc{kl}      & $389.5 \pm 21.9$  & $329.5 \pm 103.4$  & $395.9 \pm 5.7$ 
          				  & $398.6 \pm 3.0$   & $397.8 \pm 4.4$    & $398.1 \pm 3.7$\\%
         \textsc{tv}      & $935.2 \pm 62.0$  & $1076.9 \pm 165.4$ & $612.7 \pm 50.6$
                          & $670.9 \pm 54.1$  & $592.1 \pm 49.7$   & $653.0 \pm 54.1$\\%
         \textsc{mse}     & $10.6 \pm 0.5$    & $11.0 \pm 0.6$     & $9.2 \pm 0.6$   
                          & $9.4 \pm  0.7$    & $9.2 \pm 0.6$      & $9.4 \pm 0.8$\\%
         \textsc{ssim}    & $0.72 \pm 0.01$   & $0.71 \pm 0.01$    & $0.85 \pm 0.01$   
                          & $0.85 \pm 0.01$   & $0.85 \pm 0.01$    & $0.86 \pm 0.01$\\%
         iter.            & $21.2 \pm 1.3$    & $4.8 \pm 1.2$      & $47.1 \pm 4.5$           
         			      & $24.5 \pm 9.0$    & $46.0 \pm 5.8$     & $30.2 \pm 6.3$\\%
         time             & $2.9 \pm 0.2$s    & $0.4 \pm  0.2$s    & $8.8 \pm 0.9$s           
                  		  & $4.6 \pm 1.7$s    & $7.1 \pm 0.9$s     & $3.8 \pm 0.9$s\\%
         \hline%
      \end{tabular}%
      }%
      \caption{Statistical indicators at the point where the stopping criterion was reached (only for \textsc{ssim} higher is better). The algorithms were repeated 15 times with different noise. The values shown are the sample means with  99\% confidence interval.}\label{tab:imag_analysis_saem}%
   \end{table}

   \subsection{\textsc{ssaem}}

   Now, the problem is to reconstruct images from transmission data. Therefore, the maximum likelihood model is
   \begin{equation*}
      \begin{split}
         \min & \quad \sum_{i = 1}^m\left\{ \beta_ie^{-( R\vect x )_i} - \alpha_i \log\left( e^{-(R \vect x )_i} + \rho_i\right) \right\}\\
         \st  & \quad \vect x \in\mathbb R_+^n.
      \end{split}
   \end{equation*}
   In this case, we split the data in $s$ subsets (following the same construction rules than the strings) $S_l$, $l \in \{ 1, 2, \dots, s \}$, such that
   \begin{equation*}
      f( \vect x ) = \sum_{l = 1}^sF_l( \vect x ),
   \end{equation*}
   where
   \begin{equation*}
      F_l( \vect x ) = \sum_{i \in S_l}\left\{ \beta_ie^{-( R\vect x )_i} - \alpha_i \log\left( e^{-(R \vect x )_i} + \rho_i\right) \right\}.
   \end{equation*}
   Therefore
   \begin{equation*}
      \frac{\partial F_l}{\partial x_j}( \vect x ) = \sum_{i \in S_l}r_{ij}\beta_ie^{-(R\vect x)_i}\left\{ \frac{\alpha_i}{\beta_ie^{-(R\vect x)_i} + \rho_i} - 1 \right\}.
   \end{equation*}

   \paragraph{Tomographic setup} Data for this set of experiments was obtained at the Imaging Beamline (\textsc{imx}) of the Brazilian National Synchrotron Light Source (\textsc{lnls}) through illumination of an apple seed by \textsc{x}-rays. Each radiographic image was $2048 \times 2048$ pixels in size and the sample was rotated by $\pi / 512$ between each of the $512$ images, thereby totaling, for each slice to be reconstructed, $512$ views times $2048$ rays of tomographic count data $\alpha_i$. Reconstructed images have dimensions of $2048 \times 2048$ pixels.

   Blank scans were measured before and after the actual tomographic measurements of the sample and then data $\beta_i$ was obtained by linear interpolation between these two measurements. This is necessary because because \textsc{lnls} is a 2${}^\text{nd}$ generation synchrotron that does not operate in a top-up mode, i.e., the storage ring is not injected periodically in order to maintain current constant. Therefore, flux is not constant and decrease exponentially with time. As this measurement was obtained with a short exposure time, we expect that ring current (hence flux) behaves approximately as a linear model. A 4${}^\text{th}$ generation synchrotron source, such as the forthcoming brazilian source, Sirius, will operate in a top-up mode with more flux, i.e., samples will be measured with a short exposure time.

   It is important to note that iterative methods such as those presented in this paper are extremely attractive for ultra-fast tomographic experiments, with an ultra-short exposure time and a considerable amount of noise. This is the case of soft-tissue samples where low dose should be considered and a analytical reconstruction method like filtered backprojection \cite{nat86} produce low quality reconstructed images with strong streak artifacts.

   \subsubsection{Parameter Selection}

   \paragraph{Scaling matrix} For this algorithm we have used
   \begin{equation*}
      p_j = \sum_{i = 1}^m r_{ij}\{ \alpha_i - \rho_i \},
   \end{equation*}
   which provides a reasonable scaling in the sense that if the approximation $\alpha_i \approx \beta_ie^{-(R\viter xk)_i} + \rho_i$ holds, then iterations of the form $\viter x{k + 1} = \viter xk - D( \viter xk )\nabla f( \viter xk )$ become close to a multiplicative method similar to the \textsc{em} for emission tomography, and thus it can be written approximately as a product of coordinates of $\viter xk$ with corresponding ratios of expected by observed quantities. The lower threshold was set to be $\tau = 10^{-14}$.

   \paragraph{Subsets formation} Notation for \textsc{ssaem} has a different meaning, where \textsc{ssaem}-$s$ represents \textsc{ssaem} using $s$ subsets of data and always only one string. The construction of the subsets was done by including all rays in a sequence of views. This sequence of views had approximately the same size on each subset. Subset processing within the string was done selecting a different random permutation for each iteration. Because only one string was used, the experimented algorithm could be described as a less general stabilized \textsc{bramla}. This is because it was suitable for massive paralelization in a \textsc{gpu} which was neccessary given the large size of the dataset. However, if more than one \textsc{gpu} were to be used, one string for each of them could be a reasonable way of taking advantage of the multi-\textsc{gpu} setup.

   \paragraph{Stepsize sequence} As stepsize sequence for \textsc{ssaem}-$s$ and its superiorized version we have used
   \begin{equation*}
      \lambda_k = \frac{\lambda_{0, s}}{( ks + 1 )^{0.25}},
   \end{equation*}
   where $\lambda_{0, s}$ was the largest value such that the first iteration of \textsc{ssaem}-$s$ would not lead to a negative value.

   \paragraph{Superiorization sequence} The method termed \textsc{ssaem}-\textsc{tv}-$s$ corresponds to the superiorized version of \textsc{ssaem}-$s$ where the superiorization was obtained by application of~\eqref{eq:proj_subgrad} to the result of each \textsc{ssaem}-$s$ iteration. In this case, $N = 50$ and the sequence $\{ \gamma_k \}$ was chosen to be
   \begin{equation*}
      \gamma_k = \frac{\gamma_{0, s}}{( ks + 1 )^{0.35}}.
   \end{equation*}
   Now, $\gamma_{0, s}$ was selected so that
   \begin{equation*}
      \frac{\| \viter x{1 / 2} - \viter x1 \|}{\| \viter x0 - \viter x{1 / 2} \|} \approx 10^{-2}
   \end{equation*}
   in the following manner. First, $\viter x{1/2}$ is computed normally using the stepsize $\lambda_0$ as described above. Then, a tentative value $\tilde\mu_{0, s} = 1$ is used in order to compute $\viter{\tilde x}1$. Then we set
   \begin{equation*}
      \mu_{0, s} = 10^{-2}\frac{\| \viter x0 - \viter x{1 / 2} \|}{\| \viter x{1 / 2} - \viter x1 \|}.
   \end{equation*}

   \subsubsection{Numerical results}
   \begin{figure}
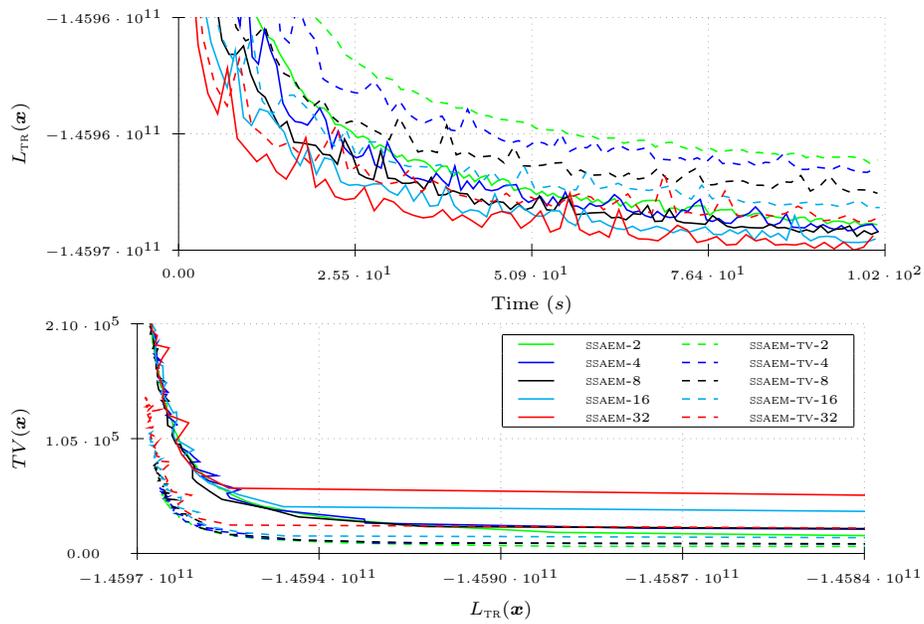

      \def\scale{1}%
      \def\lwidth{0.6pt}%
      \input{inc_times.tex}%
      \centering%
      \setlength{\grftotalwidth}{\textwidth}%
      \scalebox{\scale}{%
      \begin{grfgraphic*}[30]{%
         \grfxaxis{\xaxisValues}{\xaxisLabels}%
         \grfyaxis{\yaxisValues}{\yaxisLabels}%
         \grfxlabel{\scriptsize Time ($s$)}%
         \grfylabel[L]{\scriptsize$L_{\textsc{tr}}( \vect x )$}%
      }%
         {%
            \def\grfymax{1}%
            \grfxgrid[grfaxisstyle,gray,dotted]{\xaxisValues}{\xaxisValues}%
            \grfygrid[grfaxisstyle,gray,dotted]{\yaxisValues}{\yaxisValues}%
         }%
         \draw[green,line width=\lwidth] plot file {inc_times_2.data};%
         \draw[blue,line width=\lwidth] plot file {inc_times_4.data};%
         \draw[black,line width=\lwidth] plot file {inc_times_6.data};%
         \draw[cyan,line width=\lwidth] plot file {inc_times_8.data};%
         \draw[red,line width=\lwidth] plot file {inc_times_10.data};%
         \draw[green,dashed,line width=\lwidth] plot file {inc_times_3.data};%
         \draw[blue,dashed,line width=\lwidth] plot file {inc_times_5.data};%
         \draw[black,dashed,line width=\lwidth] plot file {inc_times_7.data};%
         \draw[cyan,dashed,line width=\lwidth] plot file {inc_times_9.data};%
         \draw[red,dashed,line width=\lwidth] plot file {inc_times_11.data};%
      \end{grfgraphic*}
      }\\%
      \input{inc_tv_obj.tex}%
      \scalebox{\scale}{%
      \begin{grfgraphic*}[0.07685]{%
         \grfxaxis{\xaxisValues}{\xaxisLabels}%
         \grfyaxis{\yaxisValues}{\yaxisLabels}%
         \grfxlabel{\scriptsize $L_{\text{\textsc{tr}}}( \vect x )$}%
         \grfylabel[L]{\scriptsize $TV( \vect x )$}%
      }%
         {%
            \def\grfymax{1}%
            \grfxgrid[grfaxisstyle,gray,dotted]{\xaxisValues}{\xaxisValues}%
            \grfygrid[grfaxisstyle,gray,dotted]{\yaxisValues}{\yaxisValues}%
         }%
         \draw[green,line width=\lwidth] plot file {inc_tv_obj_2.data};%
         \draw[blue,line width=\lwidth] plot file {inc_tv_obj_4.data};%
         \draw[black,line width=\lwidth] plot file {inc_tv_obj_6.data};%
         \draw[cyan,line width=\lwidth] plot file {inc_tv_obj_8.data};%
         \draw[red,line width=\lwidth] plot file {inc_tv_obj_10.data};%
         \draw[green,dashed,line width=\lwidth] plot file {inc_tv_obj_3.data};%
         \draw[blue,dashed,line width=\lwidth] plot file {inc_tv_obj_5.data};%
         \draw[black,dashed,line width=\lwidth] plot file {inc_tv_obj_7.data};%
         \draw[cyan,dashed,line width=\lwidth] plot file {inc_tv_obj_9.data};%
         \draw[red,dashed,line width=\lwidth] plot file {inc_tv_obj_11.data};%
         {%
            \tiny%
            \grftablegend[tr,fill=white]{|clcl|}{%
                  \hline%
                  \grflegsymbol[2em]{\grflegline[green,line width=\lwidth]} & \textsc{ssaem}-2 & \grflegsymbol[2em]{\grflegline[green,dashed,line width=\lwidth]} & \textsc{ssaem-tv}-2\\
                  \grflegsymbol[2em]{\grflegline[blue,line width=\lwidth]} & \textsc{ssaem}-4 & \grflegsymbol[2em]{\grflegline[blue,dashed,line width=\lwidth]} & \textsc{ssaem-tv}-4\\
                  \grflegsymbol[2em]{\grflegline[black,line width=\lwidth]} & \textsc{ssaem}-8 & \grflegsymbol[2em]{\grflegline[black,dashed,line width=\lwidth]} & \textsc{ssaem-tv}-8\\
                  \grflegsymbol[2em]{\grflegline[cyan,line width=\lwidth]} & \textsc{ssaem}-16 & \grflegsymbol[2em]{\grflegline[cyan,dashed,line width=\lwidth]} & \textsc{ssaem-tv}-16\\
                  \grflegsymbol[2em]{\grflegline[red,line width=\lwidth]} & \textsc{ssaem}-32 & \grflegsymbol[2em]{\grflegline[red,dashed,line width=\lwidth]} & \textsc{ssaem-tv}-32\\
                  \hline%
            }%
         }%
      \end{grfgraphic*}%
      }%
      \caption{Comparison of several \textsc{ssaem} variations and its respective superiorized versions. Top: Objective function value evolution over time. Bottom: objective function versus total variation. Legend is the same for both graphics.}\label{fig:ssaem_kl_time}%
   \end{figure}
   \begin{figure}
      \input{seed_inset_data.tex}%
      \centering%
      \setlength{\grftotalwidth}{\textwidth}%
      \begin{grfgraphic}{%
         \def\grfxmin{-1.125}\def\grfxmax{1}%
         \def\grfymin{-1.5}\def\grfymax{0.615}%
      }%
         \node[anchor=south,rotate=90,inner sep=\grflabelsep] at (-1.0,0.0) {\scriptsize$2048\times2048$};%
         \node[anchor=south,rotate=90,inner sep=\grflabelsep] at (-1.0,-1.0) {\scriptsize$256\times256$};%
         \node[anchor=south,inner sep=\grflabelsep] at (-0.5,0.5) {\scriptsize\textsc{ssaem-32}};%
         \node[anchor=south,inner sep=\grflabelsep] at (0.5,0.5) {\scriptsize\textsc{ssaem-tv-32}};%
         \node[inner sep=0pt] at (-0.5,0.0) {\includegraphics[width=\grfxunit]{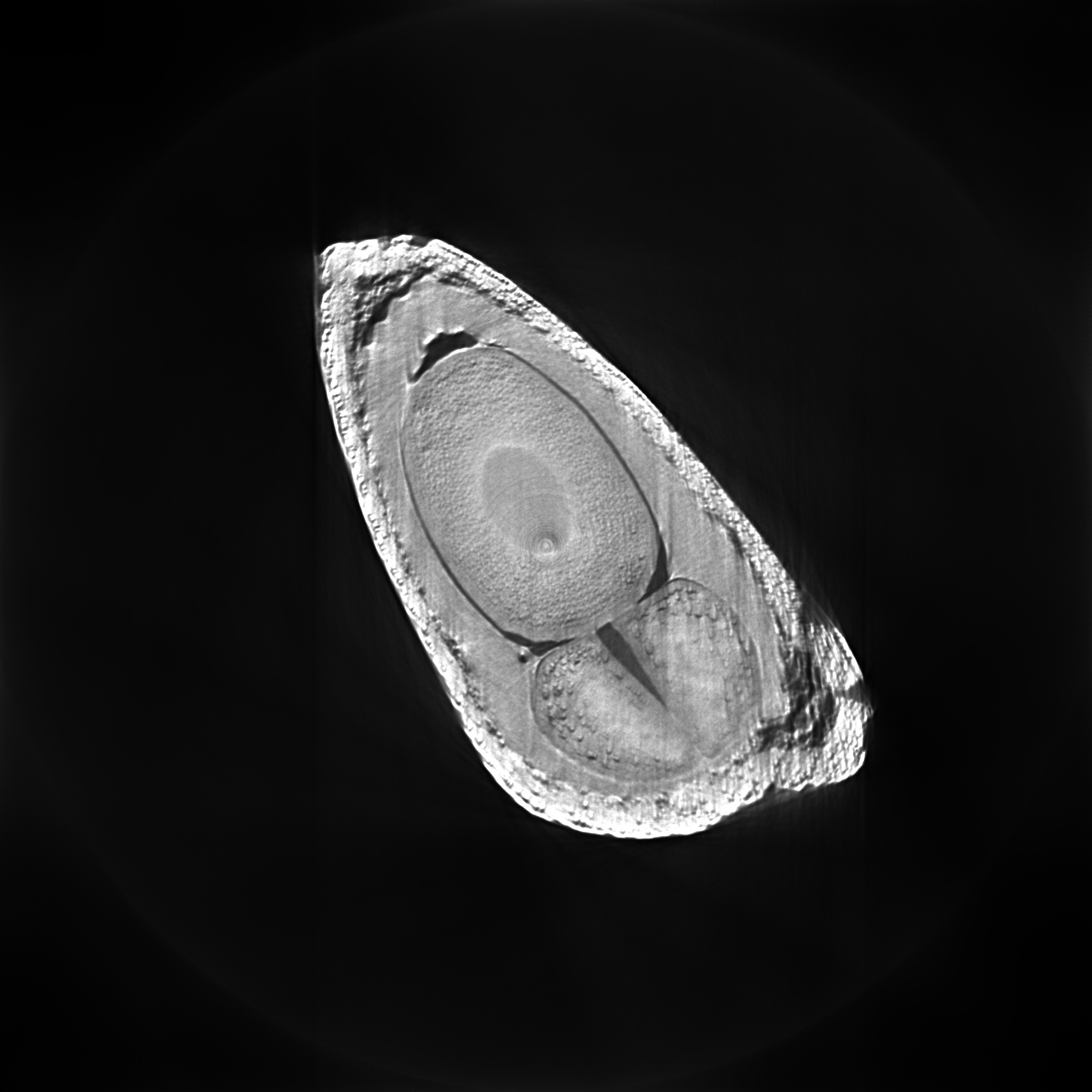}};%
         \node[inner sep=0pt] at ( 0.5,0.0) {\includegraphics[width=\grfxunit]{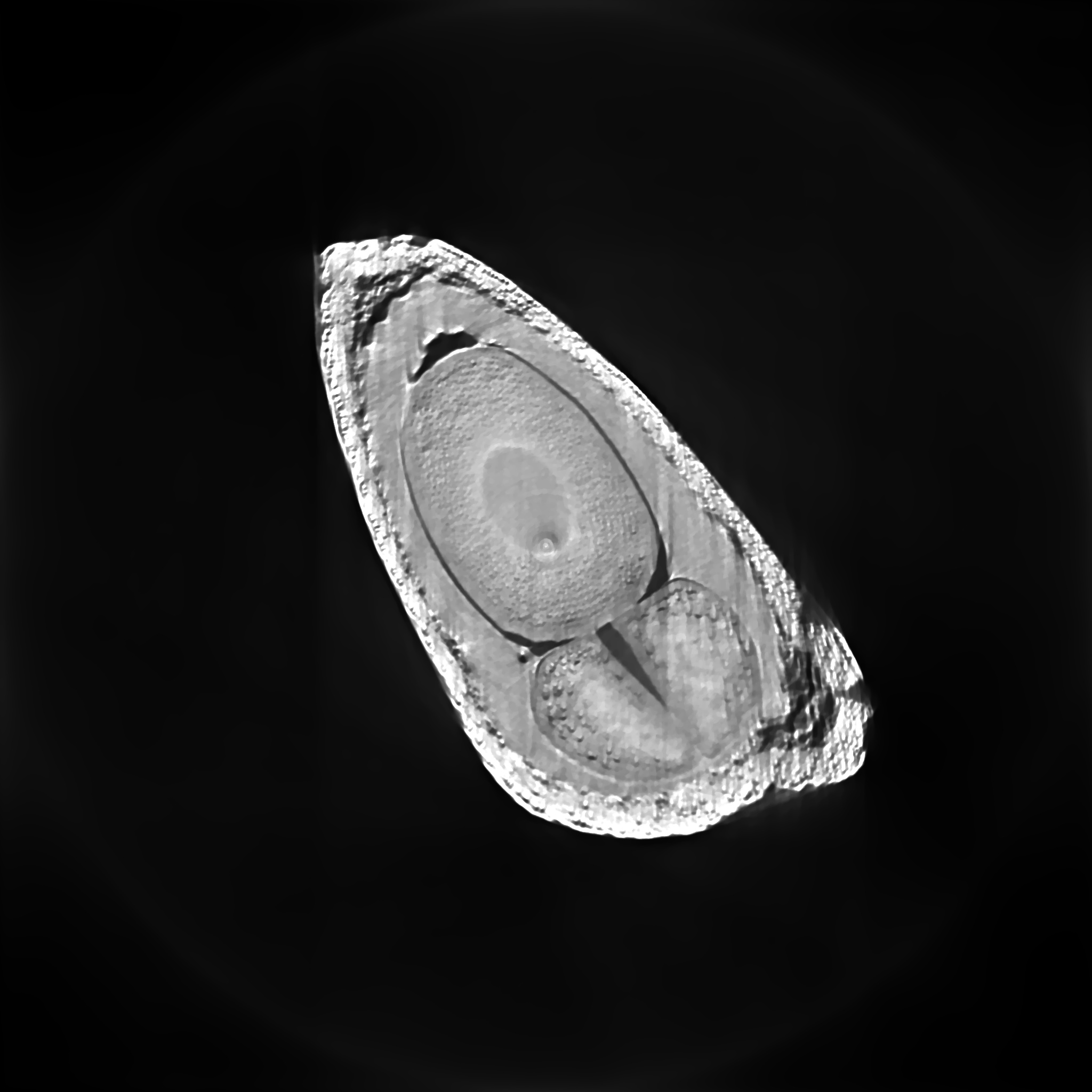}};%
         \draw[grfaxisstyle,gray] (-1.0,-0.5) rectangle (0.0,0.5);%
         \draw[grfaxisstyle,gray] (1.0,-0.5) rectangle (0.0,0.5);%
         \node[inner sep=0pt] at (-0.5,-1.0) {\includegraphics[width=\grfxunit]{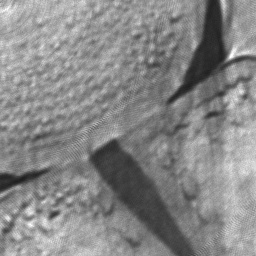}};%
         \node[inner sep=0pt] at ( 0.5,-1.0) {\includegraphics[width=\grfxunit]{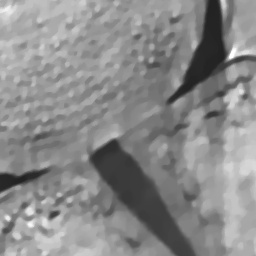}};%
         \begin{scope}[xshift=-1\grfxunit,yshift=-1.5\grfyunit]
            \input{contours_sub.tex}%
         \end{scope}
         \begin{scope}[xshift=0\grfxunit,yshift=-1.5\grfyunit]
            \input{contours_sub_tv.tex}%
         \end{scope}
         \draw[grfaxisstyle,gray] (-1.0,-1.5) rectangle (0.0,-0.5);%
         \draw[grfaxisstyle,gray] (1.0,-1.5) rectangle (0.0,-0.5);%
         \draw[grfaxisstyle,white] (\eggInsetTLX,\eggInsetTLY) rectangle (\eggInsetBRX,\eggInsetBRY);
         \draw[grfaxisstyle,white,shift={(-1,0)}] (\eggInsetTLX,\eggInsetTLY) rectangle (\eggInsetBRX,\eggInsetBRY);
      \end{grfgraphic}%
      \caption{Reconstructions of apple seed slice from synchrotron radiation transmission data. Top-row: full slice images. Bottom row: details, with overlaid isocontour and location in respective images above shown as a white square. These images are the first iterates to satisfy $L_{\textsc{tr}}( \viter xk ) \leq -1.4596 \cdot 10^{11}$.}\label{fig:seedImages}%
   \end{figure}

   The left side of Figure~\ref{fig:ssaem_kl_time} shows the objective function value as a function of computation time for \textsc{ssaem}-$s$ and \textsc{ssaem}-\textsc{tv}-$s$ with $s \in \{ 2, 4, 8, 16, 32 \}$. Notice that increasing the number of subsets speeds up convergence in terms of reduction of objective function value over time. Superiorization reduces this convergence speed, but the fastest methods without superiorization give rise to the fastest superiorized methods as a rule of thumb.

   The graphic on the right of Figure~\ref{fig:ssaem_kl_time} displays plots of the objective function value versus total variation for the iterates of each method. The curves make clear that the superiorized version of the algorihtms always present the better compromise between data adhesion and smoothness when compared to its respective non-superiorized version. Figure~\ref{fig:seedImages} shows how this property translates to better image quality in practice. The lower row of Figure~\ref{fig:seedImages} shows isocontours overlaid on top of details of the images, in order to show that the superiorized image is noticeably smoother, therefore potentially more useful for visualization tasks. If, on the other hand, some apparent detail seems to have been removed from the image, part of these fine details may be reconstruction artifacts but we make no claims in this direction. Notice that these results are in accordance with those obtained using \textsc{saem} and its superiorized versions presented before.

   \section{Conclusions}

   We have studied the possibility of superiorizing incremental algorithms for maximum likelihood tomographic image reconstruction both from the theoretical and practical point of view. Several options for the superiorization were evaluated, all of which showed promising results. Furthermore, two maximum likelihood algorithms were studied, namely \textsc{saem}~\cite{hcc14} and the newly introduced \textsc{ssaem}.

   Provided numerical evidence indicates that the extra computational burden posed by the superiorization approach is justifiable given the observed improvement in image quality. Furthermore, the new algorithm \textsc{ssaem} was shown to have better convergent properties than \textsc{saem} and related methods while retaining its good practical characteristics.

   \section*{Acknowledgements}

   We are grateful to \textsc{lnls} for providing the beamtime for the acquisition of tomographic data for the apple seed reconstruction experiments.

   \bibliography{total.bib}
   \bibliographystyle{helou_en}

\end{document}